\theoremstyle{plain}
\newtheorem{prop}{Proposition}[section]
\newtheorem{thm}[prop]{Theorem}
\newtheorem{coro}[prop]{Corollary}
\newtheorem{lemma}[prop]{Lemma}
\newtheorem*{thm*}{Theorem}
\newtheorem*{lemma*}{Lemma}
\newtheorem*{prop*}{Proposition}
\newtheorem*{thmB}{Theorem~\ref{thm:nonexistence}}
\newtheorem*{thmC}{Theorem~\ref{thm:asymptotic}}
\newtheorem*{thmE}{Theorem~\ref{thm:asymptotic2}}
\theoremstyle{definition}
\newtheorem*{defi}{Definition}
\theoremstyle{remark}
\newtheorem*{exm}{Example}
\newtheorem{remark}{Remark}
\numberwithin{table}{section}
\DeclareMathOperator{\Frob}{Frob}
\DeclareMathOperator{\End}{End}
\DeclareMathOperator{\Hom}{Hom}
\DeclareMathOperator{\Gal}{Gal}
\DeclareMathOperator{\Aut}{Aut}
\DeclareMathOperator{\Ind}{Ind}
\DeclareMathOperator{\HH}{H}
\DeclareSymbolFont{bbold}{U}{bbold}{m}{n}
\DeclareSymbolFontAlphabet{\mathbbold}{bbold}
\DeclareMathOperator{\identity}{\mathbbold{1}}
\newcommand{\F}{\mathbb F}
\newcommand{\Om}{{\mathscr{O}}}
\newcommand{\Disc}{\Delta}
\newcommand{\GL}{{\rm GL}}
\newcommand{\PGL}{{\rm PGL}}
\newcommand{\E}{E_{(a,b,c)}}
\newcommand{\Et}{\widetilde{E}_{(a,b,c)}}
\def\AA{\mathbb A}
\def\ZZ{\mathbb Z}
\def\QQ{\mathbb Q}
\def\II{\mathbb I}
\def\AA{\mathbb A}
\def\CC{\mathbb C}
\def\ord{t}
\def\cond{r}
\def\ordkappa{M}
\def\<#1>{{\left\langle{#1}\right\rangle}}
\def\abs#1{{\left|{#1}\right|}}
\def\Z{{\mathbb Z}}             
\def\Q{{\mathbb Q}}             
\def\id#1{{\mathfrak{#1}}}      
\DeclareMathOperator{\norm}{{\mathscr N}}
\DeclareMathOperator{\trace}{{\mathrm{Tr}}}
\newcommand{\lmfdbecnf}[4]{\href{http://www.lmfdb.org/EllipticCurve/#1/#2/#3/#4}{{\text{\rm#1-#2-#3#4}}}}
\let\kro\dkro
\begin{document}
	
\title[Endomorphism Algebras]{On endomorphism algebras of $\GL_2$-type
  abelian varieties and Diophantine applications}
	
\author{Franco Golfieri Madriaga}
	
\address{Center for Research and Development in Mathematics and
  Applications (CIDMA), Department of Mathematics, University of
  Aveiro, 3810-193 Aveiro, Portugal}
	
\email{francogolfieri@ua.pt}
	
\author{Ariel Pacetti}
	
\address{Center for Research and Development in Mathematics and
  Applications (CIDMA), Department of Mathematics, University of
  Aveiro, 3810-193 Aveiro, Portugal} \email{apacetti@ua.pt}
	
\author{Lucas Villagra Torcomian}
	
\address{Department of Mathematics, Simon Fraser University, Burnaby, BC V5A 1S6, Canada.}  \email{lvillagr@sfu.ca}

\keywords{Endomorphisms of $\GL_2$-type abelian varieties, Diophantine
  equations, modular method} \subjclass[2010]{11D41,11F80}

\begin{abstract}
  Let $f$ and $g$ be two different newforms without complex
  multiplication having the same coefficient field.  The main result
  of the present article proves that an isomorphism between the
  residual Galois representations attached to $f$ and to $g$ for a
  large prime $p$ (depending only on $g$) implies that the
  endomorphism algebra of the abelian variety $A_f$, attached to $f$
  by the Eichler-Shimura construction, (after tensoring with $\Q$) is
  a subalgebra of the endomorphism algebra of the abelian variety
  $A_g$ attached to $g$.  This implies important relations between
  their building blocks. A non-trivial application of our result is
  that for all prime numbers $d$ congruent to $3$ modulo $8$
  satisfying that the class number of $\Q(\sqrt{-d})$ is prime to $3$,
  the equation $x^4+dy^2 =z^p$ has no non-trivial primitive solutions
  when $p$ is large enough. We prove a similar result for the equation
  $x^2+dy^6=z^p$.
\end{abstract}
	
\maketitle

\section*{Introduction}
	
Let $\tilde{N}$ be a positive integer and let $\varepsilon$ be a
Dirichlet character of conductor dividing $\tilde{N}$. Let
$g \in S_2(\Gamma_0(\tilde{N}), \varepsilon)$
be a newform of weight 2, level $\tilde{N}$ and Nebentypus
$\varepsilon$. Let $K_g$ denote the coefficient field of the newform
$g$ (i.e.\ the minimum number field containing all the Fourier
coefficients $a_n(g)$ of $g$). For $\id{p}$ a prime ideal of $K_g$, we
denote $\rho_{g,\id{p}}$ the Galois representation attached to $g$ (by
Eichler and Shimura). After choosing an appropriate basis for the
underlying vector space, one can always assume that the representation
has coefficients in the ring of integers of the completion of $K_g$ at
$\id{p}$, so it makes sense to consider its reduction
$\overline{\rho}_{g,\id{p}}$.
	
Let $N$ be a divisor of $\tilde{N}$ and let
$f \in S_2(\Gamma_0(N), \varepsilon)$ be another newform satisfying
the following conditions:
	
\begin{enumerate}
\item The coefficient field $K_f$ of $f$ matches the coefficient field
  $K_g$ of $g$.
		
\item There exists a prime $p$ such that the semisimplification of the
  residual Galois representations $\bar{\rho}{_{f,\id{p}}}$ and
  $\bar{\rho}{_{g,\id{p}}}$ are isomorphic for some prime ideal
  $\id{p}$ of $K_g$ dividing $p$.
\end{enumerate}
By Eichler-Shimura's construction, there exist abelian varieties $A_f$
and $A_g$ defined over $\Q$ of dimension $[K_f:\Q]$ attached to each
of the eigenforms with the property that
\[
  L(A_f,s) = \prod_{\sigma\in \Hom(K_f,\CC)}L(\sigma(f),s),
\]
and a similar relation for $g$. Let $L/\Q$ be a field extension, and
denote by $\End_L(A_f)$ the ring of endomorphisms of $A_f$ defined
over $L$.
	
\vspace{5pt}
	
\noindent {\bf Question 1:} Is there a relation between
$\End_L(A_f)\otimes \Q$ and $\End_L(A_g)\otimes \Q$?
	
\vspace{5pt}
	
By a result of Hecke (\cite{MR0371577}, p.811, Satz 1 and Satz 2), if
the Fourier coefficients $a_n(f) = a_n(g)$ for sufficiently many small
values of $n$, then the two forms coincide. A congruence between
$f$ and $g$ modulo a large prime ideal $\id{p}$ implies (as will be
explained in the proof of Theorem~\ref{thm:main}) that their first
Fourier coefficients must be equal (due to the Ramanujan-Petersson
bound), proving the existence of a constant $M$ (depending on the
level of $f$ and the level of $g$) such that if $\id{p}$ is a prime
ideal whose norm is larger than $M$ then an isomorphism (of the
semisimplifications of)
$\bar{\rho}{_{f,\id{p}}} \simeq \bar{\rho}{_{g,\id{p}}}$ implies that
$f=g$. In particular their endomorphism algebras are isomorphic. The
non-trivial problem is whether given the form $g$, there exists a
constant $M_g$ (depending either on the form $g$ or on its level) such
that if $p > M_g$ then the endomorphism algebras of both varieties are
related for any form $f$.
	
One of the main results of the present article
(Theorem~\ref{thm:main}) is to provide a positive answer to this
second problem when $g$ does not have complex multiplication and
$\tilde{N}/N$ is square-free, namely we prove the existence of a
constant $M_g$ such that if $p > M_g$, then for any newform $f$
without complex multiplication and satisfying conditions $(1)$ and
$(2)$, there exists an injective morphism
$\psi:\End_L(A_f)\otimes \Q \to \End_L(A_g)\otimes \Q$.
	
Our method could be used to prove a similar result for modular forms
with complex multiplication; however, because of the applications we
have in mind, in the present article we restrict to the case of
modular forms without complex multiplication. Similarly, the
hypothesis $\tilde{N}/N$ square-free can be removed if one makes a
more detailed study of local types, but it simplifies some proofs and
is satisfied in our applications.
	
The proof is based on results of Ribet on the splitting of a modular
$\GL_2$-type abelian variety $A_f$ over a number field $L$ (as
developed in \cite{ribet1980twists,MR1212980,MR1299737}). In such
articles, the author constructs endomorphisms of $A_f$ in terms of
properties of Fourier coefficients of the form $f$. Then to provide an
answer to Question 1 it is enough to relate the Fourier coefficients
of $f$ to those of $g$.
	
The proof of Theorem~\ref{thm:main} consists on proving that the
congruence between
the newforms $f$ and $g$ implies that all inner twists of $g$ are also
inner twists of $f$. The constructed morphism is not just a morphism
of $\Q$-algebras, but also a morphism of $\Gal(L/\Q)$-modules. This
provides a relation between the splitting (up to isogeny) of the
abelian variety $A_f$ and that of $A_g$ over any field extension
$L/\Q$.
	
\vspace{5pt}
	
Here is, in our opinion, an interesting application of our main result
to the study of Diophantine equations (the original motivation for
this article) following the modular approach. Consider the equation
\begin{equation}
  \label{eq:42p}
  x^4 + dy^2 = z^p,
\end{equation}
for a fixed positive square-free integer $d$. A solution $(a,b,c)$ of
a Fermat-type equation such as~(\ref{eq:42p}) is called
\textit{primitive} if $\gcd(a,b,c)=1$, and is also said to be
\textit{trivial} if $abc=0$. To a putative primitive solution
$(a,b,c)$ one attaches an elliptic curve $\E$ defined over the
imaginary quadratic field $K\vcentcolon =\QQ(\sqrt{-d})$ given by the
equation
\begin{equation}
  \label{eq:curve-E}
  \E: y^2=x^3+4ax^2+2(a^2+\sqrt{-d}b)x, 
\end{equation}
whose discriminant equals $512(a^2+b\sqrt{-d})c^p$. The curve $\E$ has
additive/multiplicative reduction at the primes dividing $2$,
multiplicative reduction at all odd primes dividing $c$ and good
reduction at all other primes (our primitivity assumption implies that
$\gcd(c,d)=1$). The curve $\E$ turns out to be a $\QQ$-curve hence in
particular (by a result of Ribet in \cite{MR2058653}, see the
paragraph after Theorem 6.3), there exists a Hecke character
$\varkappa: \Gal_K \rightarrow \overline{\QQ}^{\times}$, where
$\Gal_K$ denotes the absolute Galois group of $K$, such that the
twisted representation
\[
  \rho \vcentcolon = \rho_{\E,p} \otimes \varkappa : \Gal_K
  \rightarrow \GL_2(\overline{\QQ_p})
\]
extends to a representation $\tilde{\rho}$ of the whole absolute
Galois group $\Gal_{\QQ}$. If the residual representation of
$\tilde{\rho}$ is reducible, then $\tilde{\rho}$ is modular by
\cite[Theorem 1.0.2]{MR4467307}. Otherwise, the residual
representation of $\tilde{\rho}$ is modular by Serre's conjecture
(proven in \cite{MR2551763,MR2551764}). Then $\tilde{\rho}$ itself is
modular by \cite{Kisinann} (theorem in the second page), i.e.\ there
exist a modular form $f \in S_2(\Gamma_0(N),\varepsilon)$, where $N$
is the conductor of the representation $\tilde{\rho}$ and
$\varepsilon$ its Nebentypus (an explicit formula for $N$ and
$\varepsilon$ is given in \cite[Theorem 4.2]{PT}) and a prime ideal
$\frak{p}$ in the coefficient field $K_f$ of $f$ such that
$\tilde{\rho} \simeq \rho_{f, \frak{p}}$.
	
The curve $\E$ has multiplicative reduction at all odd primes dividing
$c$ (since the solution is primitive, if a prime $q$ divides $c$, it
cannot divide $d$), so by a result of Hellegouarch (\cite[Theorem
6.5.1]{MR1858559}) together with Ribet's lowering the level result,
there exists a newform $g$ of level $\tilde{N}$ (only divisible by
primes dividing $2d$) congruent to $f$ modulo $p$. Then one is led to
compute the space $S_2(\Gamma_0(\tilde{N}), \varepsilon)$ and to prove
that no newform can be related to a non-trivial primitive solution of
(\ref{eq:42p}). By an idea due to Mazur (see \cite{MR3098134}), one
can discard all forms $g$ whose coefficient field $K_g$ does not match
that of $f$ when the prime $p$ is large enough. This justifies the
first hypothesis in Question~1. However, the newforms $g$ whose
coefficient field $K_g$ matches $K_f$ could pass this elimination
procedure. There is a plausible situation that might appear (because
$K$ is an imaginary quadratic field) which is that the building block
of $A_g$ (see \textsection 2.1 for a quick review of building blocks)
might have dimension two (i.e.\ is related to a ``fake elliptic
curve'', namely an abelian surface with quaternionic multiplication).
	
The abelian variety $A_f$ has a $1$-dimensional building block, namely
the elliptic curve $\E$ itself. Suppose once again that the newform
$g$ does not have complex multiplication.

\vspace{5pt}
	
\noindent {\bf Question 2:} Is it true that the building block $E_g$
of $A_g$ has dimension $1$? If so, what is the minimum field of
definition of the elliptic curve $E_g$?
	
\vspace{5pt}
	
To our knowledge no general method was developed before to provide an
answer to Question 2 (i.e.\ an unpleasant type of congruence between
an elliptic curve and a fake elliptic curve). A key result used in the
pioneering article \cite{MR3811755} is that fake elliptic curves have
potentially good reduction at all primes. In particular, if the
elliptic curve attached to a putative solution of our favorite
Diophantine equation has a prime of multiplicative reduction, the
building block $E_g$ must have dimension $1$. This is the case for
Fermat's original equation, as exploited in \cite{MR3811755} while
proving asymptotic results for general number fields. Unfortunately,
this is not the case for many Diophantine equations, like equation
(\ref{eq:42p}), which motivated the results of the present article. A
possible workaround (to get partial results) is to impose some
constraints on the solutions in order to ensure a prime of potentially
multiplicative reduction for the attached elliptic curve. For
instance, this idea was used in \cite{IKOpp2, mocanu} while studying
equations $x^p+y^p=z^2$ and $x^p+y^p=z^3$.
	
One of the main contributions of the present article is to provide a
positive answer to Question 2 when $p$ is large enough (see
Proposition~\ref{prop:decomposition} and
Theorem~\ref{thm:fielddefinition}). Furthermore, we prove that the
elliptic curve $E_g$ can be defined over the quadratic field $K$ (see
Theorem~\ref{thm:fielddefinition}) and the building block is totally
defined over $K(\sqrt{-2})$. A non-trivial strengthening of our
solution to this problem (required while studying
equation~(\ref{eq:42p})) is that the curve $E_g$ can be chosen so that
residual Galois representations $\bar{\rho}{_{\E,p}}$ and
$\bar{\rho}{_{E_g,p}}$ are isomorphic (see
Theorem~\ref{thm:main-decomposition}).
	
The method used to answer Question 2 could be used to prove
non-existence of solutions of other Diophantine equations over number
fields (like imaginary quadratic ones).
	
For proving non-existence of non-trivial primitive solutions
of~(\ref{eq:42p}), the last missing ingredient is a result on
non-existence of elliptic curves with the same properties as $E_g$
defined over $K$. The key property that $E_g$ satisfies is that it has
conductor supported on primes dividing $2$ and it has a $K$-rational
point of order $2$.  Some quite recent results on Diophantine
equations depend on results of non-existence of elliptic curves over
number fields whose conductor is supported at a unique prime (see for
example~\cite{MR4054049}). Here is an instance of such a result that
we prove in the present article.
	
\begin{thmB}
  Let $d\neq 3$ be a prime number such that $d\equiv3\pmod8$, and $3$
  does not divide the class number of $K=\Q(\sqrt{-d})$. Then the only
  elliptic curves defined over $K$ having a $K$-rational point of
  order $2$ and conductor supported at $2$ are those that are base
  change of $\Q$.
\end{thmB}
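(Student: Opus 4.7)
My plan is to exploit the $K$-rational $2$-torsion point to put $E$ in a short Weierstrass form, use the good-reduction condition (for both $E$ and its $2$-isogenous curve) to produce an $S$-unit relation with $S=\{\mathfrak{p}_2\}$, and then close the argument by a congruence modulo $8$.

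Since $d\equiv 3\pmod 8$, the prime $2$ is inert in $K$, so $\mathfrak{p}_2=(2)$; and since $d>3$, $\mathcal{O}_K^\times=\{\pm 1\}$. Translating the rational $2$-torsion point to $(0,0)$ and completing the square, $E$ has a model
\[
E\colon\ y^2 = x^3 + ax^2 + bx, \qquad a,b\in K,
\]
with $2$-isogenous quotient $E'\colon y^2 = x^3 - 2ax^2+(a^2-4b)x$. Both $E$ and $E'$ have good reduction outside $\mathfrak{p}_2$; combining the two minimal-discriminant conditions and invoking the hypothesis $3\nmid h_K$ to extract cube roots in the class group (so as to cancel the cubic piece of the $12$th-power ambiguity between minimal and non-minimal models), one may twist $(x,y)\mapsto (u^2 x,u^3 y)$ so that the fractional ideals $(b)$ and $(a^2-4b)$ are each powers of $\mathfrak{p}_2$. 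Since $\mathfrak{p}_2=(2)$ and $\mathcal{O}_K^\times=\{\pm 1\}$, this forces
\[
b = \varepsilon_1\,2^m, \qquad a^2-4b = \varepsilon_2\,2^{m'}, \qquad \varepsilon_i\in\{\pm 1\},\ m,m'\in\mathbb{Z}_{\geq 0},
\]
and in particular $a^2\in\mathbb{Z}$.

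Writing $a = s+t\sqrt{-d}$ with $s,t\in\mathbb{Q}$, the condition $a^2\in\mathbb{Q}$ forces $st=0$. If $t=0$ then $a,b\in\mathbb{Q}$ and $E$ is defined over $\mathbb{Q}$, hence a base change. If instead $s=0$, then $a=t\sqrt{-d}$ and square-freeness of $d$ combined with $a^2=-dt^2\in\mathbb{Z}$ force $t\in\mathbb{Z}$; if furthermore $t=0$ then $a=0$ and $E\colon y^2=x^3+bx$ is again defined over $\mathbb{Q}$. Otherwise $t\neq 0$, and factoring the common power of $2$ from
\[
dt^2 = -\varepsilon_1\,2^{m+2} - \varepsilon_2\,2^{m'}
\]
(writing $t=2^c t'$ with $t'$ odd) reduces the equation to
\[
d(t')^2 = 2^\beta\pm 1, \qquad \beta\geq 1.
\]

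Since $d\equiv 3\pmod 8$ and odd squares are $\equiv 1\pmod 8$, the left side is $\equiv 3\pmod 8$. The residues of $2^\beta+1$ modulo $8$ for $\beta\geq 1$ are $3,5,1,1,\ldots$ and those of $2^\beta-1$ are $1,3,7,7,\ldots$, so the residue $3$ is attained only at $\beta=1$ (with the $+$ sign) or $\beta=2$ (with the $-$ sign), both of which yield $d(t')^2 = 3$. Since $d$ is a prime with $d\neq 3$, this is impossible, completing the proof. The hard part is the globalization hidden in the previous paragraph: converting the local good-reduction data for $E$ and $E'$ into a clean $S$-unit form with $(b)$ and $(a^2-4b)$ literally powers of $\mathfrak{p}_2$. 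This is precisely where the class-number hypothesis $3\nmid h_K$ enters; once the normalization is achieved, the remaining argument is the elementary congruence above.
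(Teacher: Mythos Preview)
Your overall strategy coincides with the paper's: put $E$ in the form $y^2=x^3+ax^2+bx$, force $b\in\Z$ and $a^2\in\Z$, split into $a\in\Q$ versus $a\in\sqrt{-d}\,\Q$, and finish with a congruence modulo $8$. The endgame is correct and is exactly the paper's argument (the paper phrases it as $-d\tilde a^{\,2}\equiv 5\pmod 8$, you as $d(t')^2\equiv 3\pmod 8$; these are the same statement).

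The gap is in your globalization step, and your diagnosis of which hypothesis drives it is off. Using the $2$-isogenous curve $E'$ only yields, at each odd prime $\mathfrak q$, the congruences $2v_{\mathfrak q}(b)+v_{\mathfrak q}(a^2-4b)\equiv 0$ and $v_{\mathfrak q}(b)+2v_{\mathfrak q}(a^2-4b)\equiv 0\pmod{12}$; these force each valuation to be $\equiv 0\pmod 4$ but allow them to differ by a multiple of $12$, so a single global twist cannot kill both simultaneously. What actually pins down $v_{\mathfrak q}(b)=v_{\mathfrak q}(a^2-4b)$ is the $2$-torsion itself: the local minimal model at $\mathfrak q$ can be written with the \emph{same} $K$-rational $2$-torsion point at the origin, hence differs from your model by a pure twist $u_{\mathfrak q}$, and then both valuations equal $4v_{\mathfrak q}(u_{\mathfrak q})$. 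Globalizing this requires a \emph{fourth} root in the class group, so the operative hypothesis is $2\nmid h_K$ (automatic from $d$ prime, by genus theory), not $3\nmid h_K$; your ``cubic piece of the $12$th-power ambiguity'' heuristic is pointing at the wrong prime, and $E'$ is not needed.

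The paper sidesteps this entirely by invoking a semi-global minimal model: one that is minimal everywhere except possibly at a single odd prime $\mathfrak p$ where $v_{\mathfrak p}(\Delta)=12$. Then $\mathfrak p^{12}$ is principal, and $\gcd(h_K,6)=1$ makes $\mathfrak p$ itself principal, so a genuine global minimal model exists with $\Delta=\pm 2^r$ as an \emph{element}. From $b^2(a^2-4b)=\pm 2^{r-4}$ one reads off $b=\pm 2^t$ immediately (any odd prime dividing $b$ would divide the product), and then $a^2\in\Z$ follows in one line. This is both shorter and uses exactly the stated hypotheses.
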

	
As a consequence we can prove the following asymptotic result.
	
\begin{thmC}
  Let $d$ be a prime number congruent to $3$ modulo $8$ and such that
  the class number of $\Q(\sqrt{-d})$ is not divisible by $3$. Then
  there are no non-trivial primitive solutions of the equation
  \[
    x^4+dy^2=z^p,
  \]
  for $p$ large enough.
\end{thmC}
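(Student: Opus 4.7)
The plan is to run the modular method sketched in the introduction, with the structural results of the paper plugged in. Assume for contradiction that there is a sequence of primes $p_n\to\infty$ for which equation~\eqref{eq:42p} admits non-trivial primitive solutions $(a_n,b_n,c_n)$. To each solution attach the Frey $\QQ$-curve $E_n:=E_{(a_n,b_n,c_n)}$ over $K=\QQ(\sqrt{-d})$; Ribet's theorem provides a character $\varkappa_n$ so that $\rho_{E_n,p_n}\otimes\varkappa_n$ extends to $\Gal_\QQ$, and by Serre's conjecture together with Taylor--Wiles modularity this extension is isomorphic to $\rho_{f_n,\id{p}_n}$ for a newform $f_n\in S_2(N_n,\varepsilon)$. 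Multiplicative reduction of $E_n$ at odd primes dividing $c_n$ combined with Ribet's level-lowering produces a newform $g_n$ of some level $\tilde{N}$ supported only at primes dividing $2d$, and a prime $\id{p}_n'$ of $K_{g_n}$ with $\overline{\rho_{f_n,\id{p}_n}}\cong\overline{\rho_{g_n,\id{p}_n'}}$. Since $S_2(\tilde{N},\varepsilon)$ is finite-dimensional, after extracting a subsequence we may assume $g_n=g$ is fixed.

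Mazur's coefficient-field argument eliminates all $g$ whose coefficient field $K_g$ does not match that of $f_n$; what remains is the case $K_g=K_{f_n}$, which is precisely the situation Theorem~\ref{thm:main} was designed to handle. One may further assume $g$ has no complex multiplication (the CM case is ruled out by comparing the allowed nebentypus with the ramification of $E_n$ at primes above~$2$, where the hypothesis $d\equiv 3\pmod 8$ is used). Then Theorem~\ref{thm:fielddefinition} combined with Theorem~\ref{thm:main-decomposition} yields a one-dimensional building block $E_g$ of $A_g$ defined over $K$, with conductor supported above~$2$, carrying a $K$-rational point of order~$2$ inherited through the residual isomorphism from the Frey curve, and satisfying $\overline{\rho_{E_g,p_n}}\cong\overline{\rho_{E_n,p_n}}$ for every $p_n$ in the subsequence. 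Theorem~\ref{thm:nonexistence} now forces $E_g$ to be the base change from $\QQ$ of an elliptic curve $E'/\QQ$ with rational $2$-torsion and conductor a power of~$2$; such $E'$ form a finite, explicit list.

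The final step is to eliminate each candidate $E'$. Choose an auxiliary rational prime $\ell\neq 2,d$ of good reduction for $E'$; for a prime $\lambda$ of $K$ above $\ell$, the trace $a_\lambda(E_n)$ depends only on the residue class of $(a_n,b_n,c_n)$ modulo~$\ell$, hence takes finitely many values as the solution ranges over primitive residues. Call this finite set $T_\lambda$. The congruence $\overline{\rho_{E_g,p_n}}\cong\overline{\rho_{E_n,p_n}}$ gives $a_\lambda(E')\equiv a_\lambda(E_n)\pmod{p_n}$; since both sides lie in the Hasse--Weil interval, for $p_n$ large enough the congruence forces an integral equality. It therefore suffices to exhibit, for each candidate $E'$, some prime $\ell$ for which $a_\lambda(E')\notin T_\lambda$, a finite computation independent of $p$.

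The main obstacle is this last elimination step: verifying that every base-change candidate in the (short) list of rational elliptic curves of $2$-power conductor with rational $2$-torsion can be discarded by the auxiliary-prime test, uniformly over primitive solutions. A secondary technical point is controlling the possibility that the level-lowered form $g$ has complex multiplication, which requires excluding CM forms compatible both with the allowed nebentypus of $f_n$ and with the $2$-adic ramification of $E_n$; this is where the congruence condition $d\equiv 3\pmod 8$, together with the condition $3\nmid h(K)$ needed for Theorem~\ref{thm:nonexistence}, plays the decisive role.
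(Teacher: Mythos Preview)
Your overall architecture matches the paper's, but two of the steps you wave through are exactly the places where real work is needed.

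\textbf{The $2$-torsion on $E_g$.} You write that $E_g$ carries ``a $K$-rational point of order~$2$ inherited through the residual isomorphism from the Frey curve''. This does not follow: the isomorphism $\overline{\rho_{E_g,p_n}}\simeq\overline{\rho_{E_n,p_n}}$ is a congruence modulo $p_n$, and a $2$-torsion point is a mod-$2$ phenomenon; nothing about the $p_n$-torsion tells you directly that $\overline{\rho_{E_g,2}}$ is reducible. The paper closes this gap as follows. From Lemma~\ref{lemma:equality-ap} one gets $a_{\id{q}}(E_g)=a_{\id{q}}(\E)$ for all primes $\id{q}$ of norm below an explicit bound (once $p$ is large), hence $a_{\id{q}}(E_g)$ is even for those $\id{q}$. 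If $E_g$ had no $K$-rational $2$-torsion, then $\overline{\rho_{E_g,2}}$ would cut out an extension $T/K$ of degree divisible by $3$, unramified outside~$2$; by Hermite--Minkowski there are only finitely many such $T$, and an effective Chebotarev bound produces a prime $\id{q}$ of bounded norm with $\Frob_{\id{q}}$ of order $3$ in $\Gal(T/K)$, forcing $a_{\id{q}}(E_g)$ odd. This contradicts the previous parity conclusion. Without this (or an equivalent) argument your proof has a genuine hole.

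\textbf{Excluding CM for $g$.} Your sketch (``comparing the allowed nebentypus with the ramification of $E_n$ at primes above~$2$'') is not the paper's argument and is not obviously sufficient. The paper instead shows, via Lemma~\ref{lemma:not-div} and a congruence modulo $8$, that $c$ is coprime to $6$ and hence has an odd prime factor $q>3$; it then invokes Ellenberg's large-image theorem to conclude that the projective mod-$p$ image of $\E$ is surjective, which is incompatible with $g$ having CM. You should either make your nebentypus/ramification argument precise or adopt this route.

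Two smaller points. First, you assert that $E_g$ has conductor supported above~$2$; a priori Theorem~\ref{thm:main-decomposition} only gives conductor supported at primes dividing~$2d$, and the paper removes the prime above $d$ by noting that $\E$ has good reduction there and transferring this through the mod-$p$ isomorphism. Second, for the final elimination you propose a case-by-case auxiliary-prime computation over the finite list of rational curves of $2$-power conductor with $2$-torsion. The paper avoids any computation: once $E_g$ is a base change from $\Q$, one has $a_{\id{q}}(E_g)=a_{\overline{\id{q}}}(E_g)$ for every split prime $q$, hence (via Lemma~\ref{lemma:equality-ap}) $a_{\id{q}}(\E)=a_{\overline{\id{q}}}(\E)$ for small split $q$, which fails for the genuinely non-rational $\Q$-curve $\E$ attached to a non-trivial solution. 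Your approach would also work in principle, but it leaves an unperformed verification where the paper gives a uniform conceptual contradiction.
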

	
A similar approach works while studying the Diophantine equation
\begin{equation}
  \label{eq:2}
  x^2+dy^6 = z^p.
\end{equation}
To a putative solution $(a,b,c)$ one can attach the elliptic curve
\begin{equation}
  \label{eq:ell-curve-2}
  \Et: y^2+6b\sqrt{-d}xy-4d(a+b^3\sqrt{-d})y = x^3,
\end{equation}
over the quadratic field $K=\Q(\sqrt{-d})$.  The curve $\Et$ is again
a $\Q$-curve with a $K$-rational point of order $3$ (namely the point
$(0,0)$). Such equation was also studied in \cite{PT}. Once again,
there is a character $\varkappa$ such that the twisted representation
$\rho_{\Et,p} \otimes \varkappa$ extends to an odd representation of
$\Gal_\Q$. The main difference with equation~(\ref{eq:42p}) is that
the elliptic curve $\Et$ has bad additive reduction at all primes of
$K$ dividing $d$ (extra care must be taken at primes dividing $6$, see
\cite[Lemmas 2.13, 2.14 and 2.15]{PT}), while the curve $\E$ had only
bad reduction (either additive or multiplicative, depending on $d$
modulo $8$, see \cite[Lemma 2.8]{PT}) at primes dividing $2$. We will
prove the following result.
	
\begin{thmE}
  Let $d$ be a prime number congruent to $19$ modulo $24$ and such
  that the class number of $\Q(\sqrt{-d})$ is not divisible by
  $3$. Then there are no non-trivial primitive solutions of the
  equation
  \[
    x^2+dy^6=z^p,
  \]
  for $p$ large enough.
\end{thmE}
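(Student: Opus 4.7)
The plan is to mirror the proof of Theorem~\ref{thm:asymptotic}, substituting the role of the $2$-torsion structure of $\E$ by the $3$-torsion structure of $\Et$. Starting from a putative non-trivial primitive solution $(a,b,c)$, I would consider the Frey curve $\Et$ defined over $K=\Q(\sqrt{-d})$ by equation~(\ref{eq:ell-curve-2}). As established in~\cite{PT}, $\Et$ is a $\Q$-curve carrying a $K$-rational point of order $3$ (namely $(0,0)$), and there exists a twisting character $\varkappa$ such that $\rho_{\Et,p}\otimes\varkappa$ extends to an odd two-dimensional representation $\tilde{\rho}$ of $\Gal_{\Q}$. Modularity results (Serre's conjecture together with Taylor--Wiles type arguments) then produce a newform $f\in S_2(N,\varepsilon)$ and a prime $\id{p}\mid p$ in $K_f$ with $\tilde{\rho}\simeq\rho_{f,\id{p}}$.

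Since $\Et$ has multiplicative reduction at every prime of $K$ dividing $c$ and coprime to $6d$, Ribet's lowering the level theorem yields a newform $g$ of level $\tilde{N}$ supported only on primes dividing $6d$, together with a congruence between the mod-$\id{p}$ Galois representations of $f$ and $g$. The Mazur-style elimination argument used in~\cite{PT} discards all newforms whose coefficient field does not match $K_f$, so that for the surviving candidates one is precisely in the setting of Theorem~\ref{thm:main}. Applying Theorems~\ref{thm:main}, \ref{thm:fielddefinition}, and \ref{thm:main-decomposition}, I would conclude, for $p$ sufficiently large, that $A_g$ decomposes over $K$ into building blocks matching those of $A_f$, and in particular has a one-dimensional building block $E_g$ defined over $K$ satisfying $\overline{\rho_{E_g,p}}\simeq\overline{\rho_{\Et,p}}$.

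The next step is to extract the arithmetic invariants of $E_g$. Since the mod-$\id{p}$ congruence preserves conductor exponents at primes $\ell\neq p$ (and $\Et$ has additive reduction only at primes above $6d$), the conductor of $E_g$ is supported on primes of $K$ lying above $6d$. Crucially, $E_g$ must carry a $K$-rational point of order $3$: this is forced by the shape of the nebentypus $\varepsilon$, which is itself determined by the determinant of $\rho_{\Et,p}\otimes\varkappa$ and reflects the existence of the $K$-rational $3$-torsion subgroup on $\Et$. The hypotheses $d\equiv 19\pmod{24}$ (which split as $d\equiv 3\pmod 8$ and $d\equiv 1\pmod 3$, the latter making $3$ inert in $K$) and $3\nmid h(K)$ then constrain the possible local inertial types of $E_g$ at primes above $2$, $3$ and $d$.

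The decisive step is the analogue of Theorem~\ref{thm:nonexistence} adapted to the $3$-torsion situation: every elliptic curve defined over $K$ with a $K$-rational point of order $3$ whose conductor is supported at primes above $6d$ must be the base change of an elliptic curve over $\Q$. The assumption $3\nmid h(K)$ is used to rule out descent of non-trivial $3$-isogeny structures to $K$ (via class field theory applied to the cyclic $3$-isogeny character), while $d\equiv 19\pmod{24}$ fixes the local types at $2$, $3$ and $d$, most delicately controlling wild ramification at the inert prime above $3$. Once this non-existence statement is proved, a short direct verification (comparison of traces of Frobenius at an auxiliary good prime, exactly as in the proof of Theorem~\ref{thm:asymptotic}) shows that no base-change curve can correspond to a genuine non-trivial primitive solution of~(\ref{eq:2}). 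The main obstacle I anticipate is precisely this last non-existence step: the analysis at the prime above $3$, where additive reduction, wild ramification, and the rational $3$-torsion hypothesis interact, is considerably more delicate than the $2$-torsion counterpart handled in Theorem~\ref{thm:nonexistence}.
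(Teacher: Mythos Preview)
Your high-level architecture is correct and matches the paper: attach $\Et$, twist, modularize, lower the level to $g$, invoke Theorems~\ref{thm:main} and~\ref{thm:main-decomposition} to descend to a building block $E_g/K$, and then eliminate $E_g$ by a non-existence result for elliptic curves over $K$. Two of your intermediate steps, however, diverge from what the paper actually does, and in one case your justification is not right.

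First, the claim that $E_g$ carries a $K$-rational $3$-torsion point is not ``forced by the shape of the nebentypus $\varepsilon$''. The nebentypus records the determinant of the $p$-adic representation and carries no direct information about $3$-torsion. The paper instead argues as follows: since $\Et$ has a $K$-rational $3$-torsion point, $3\mid a_{\id{q}}(\Et)$ at every good prime; Lemma~\ref{lemma:equality-ap} transfers this to $a_{\id{q}}(E_g)$ for all small $\id{q}$; a Sturm-type bound then propagates the congruence $a_{\id{q}}(E_g)\equiv 0\pmod 3$ to all good primes; and finally a result of Katz (\cite{MR604840}) produces a curve $E'$ that is $K$-isogenous to $E_g$ and actually has a $K$-rational point of order $3$. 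Note that one only obtains the $3$-torsion point on an isogenous curve, not on $E_g$ itself.

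Second, the non-existence step is carried out quite differently from what you sketch. The paper does not use class field theory on a $3$-isogeny character. It first records an extra local constraint you did not mention: $\Et$ has additive reduction at $(\sqrt{-d})$ and acquires good reduction over $K(\sqrt[6]{-d})$, which forces the minimal discriminant of $E'$ to have \emph{even} valuation at $(\sqrt{-d})$. Then, using the $3$-torsion point to write $E'$ in the shape $y^2+a_1xy+a_3y=x^3$, the discriminant identity $a_3^3(a_1^3-27a_3)=\pm 2^r3^qd^s$ together with unique factorization (here the hypothesis on the class number is used only to guarantee a global minimal model) reduces the problem to an explicit finite search: Ogg's bounds give $r\le 16$, $q\le 13$, and a computer check shows that the only non-rational candidates occur for $d=547$ and are in fact $K$-isogenous to rational curves. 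So the role you assign to $3\nmid h(K)$ (ruling out ``descent of non-trivial $3$-isogeny structures'') is not the one it plays here, and the ``wild ramification at $3$'' you anticipate as the main obstacle is handled simply by the conductor bound, not by any delicate local analysis.
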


\vspace{5pt}
	
The article is organized as follows: the first section recalls the
definition and main properties of inner twists as developed by Ribet
in \cite{ribet1980twists}. It also contains the proof of
Theorem~\ref{thm:main} providing an answer to Question 1. In
Section~\ref{section:decomposition}, after recalling the basic
definitions of building blocks and fields of definition, we apply the
theory of inner twists to the abelian variety $A_f$ attached to the
(non-quadratic twist of the) elliptic curve $\E$ coming from a
putative solution $(a,b,c)$ of~(\ref{eq:42p}). In particular, we
compute explicitly the group of inner twists of $A_f$ and use this
information to answer Question 2 (and its consequences). The last part
of the article is devoted to prove Theorem~\ref{thm:nonexistence} on
non-existence of elliptic curves over $K$ with a $2$-torsion point and
bad reduction only at the prime $2$. It also contains the proof of
Theorem~\ref{thm:asymptotic} and of Theorem~\ref{thm:asymptotic2}. The
code used to prove Theorem~\ref{thm:asymptotic2} is available at
\url{https://github.com/lucasvillagra/Asymptotic-results}
\subsection*{Acknowledgments} AP was partially supported by the
Portuguese Foundation for Science and Technology (FCT) Individual Call
to Scientific Employment Stimulus
(\url{https://doi.org/10.54499/2020.02775.CEECIND/CP1589/CT0032}) and
by CIDMA under the FCT Multi-Annual Financing Program for R\&D
Units. LVT was supported by a CONICET grant and FGM was supported by
an FCT grant UI/BD/152573/2022. We thank the anonymous referees for
their comments that improved the quality of the present article.

\section{Inner Twists}
	
Let $f \in S_k(\Gamma_0(N),\varepsilon)$ be a modular form and $\chi$
be a Dirichlet character. The \emph{twist} of $f$ by $\chi$ (denoted
$f\otimes\chi$) is the newform attached to the modular form with
Fourier expansion $\sum_{n\ge1} a_n(f)\chi(n)q^n$.
\begin{defi}
  A modular form $f\in S_k(\Gamma_0(N),\varepsilon)$, with $k\ge2$,  has \textit{complex
    multiplication} (CM for short) if there exists a non-trivial
  Dirichlet character $\chi$ such that $f = f \otimes \chi$.
\end{defi}
	
As mentioned in the introduction, due to the applications we have in
mind, during this article we will restrict to modular forms without
complex multiplication.  Recall the following definition of
\cite{ribet1980twists}.
	
\begin{defi}
  Let $f\in S_2(\Gamma_0(N),\varepsilon)$ be a newform without complex
  multiplication, and let $K_f$ denote its coefficient field. The
  \textit{set of inner twists} of $f$ is defined as
  \begin{multline} \label{defi:inner-twist}
    \Gamma_f \vcentcolon  = \{ \gamma \in \Hom_\Q(K_f,\CC)  : \; \exists \ \chi_\gamma \text{ a Dirichlet character} \  \ \text{with} \\
    \gamma(a_p(f)) = \chi_\gamma(p) a_p(f) \ \text{for almost all} \
    p\}.
  \end{multline}
\end{defi}
	
\begin{remark}
  \label{remark:primeslevel}
  Although it is not explicitly stated in Ribet's article, it is the
  case that we can replace the condition ``for almost all $p$'' in
  (\ref{defi:inner-twist}) by ``for all $p$ not dividing the conductor
  of $\chi_\gamma$ nor $N$''.
\end{remark}
	
Here are some facts about $\Gamma_f$ and its elements:
\begin{enumerate}
\item For $\gamma \in \Gamma_f$, $\gamma(K_f) \subset K_f$ (see
  \cite[Proposition 3.2]{ribet1980twists}). In particular, $\Gamma_f$
  is a subset of $\Aut_\Q(K_f)$ and the values of $\chi_\gamma$ belong
  to $K_f$.
\item The set $\Gamma_f$ is in fact an abelian group (see
  \cite[Proposition 3.3]{ribet1980twists}).
		
\item Given $\gamma \in \Gamma_f$, the character $\chi_{\gamma}$ is
  unique (see \cite{ribet1980twists}, page 48).
\item The conductor of $\chi_\gamma$ is supported at primes dividing
  $N$ (see \cite{ribet1980twists}, page 48).
\end{enumerate}
	
The third property implies that we can (and will) denote elements of
$\Gamma_f$ by pairs $(\gamma, \chi)$.

\begin{exm}
  If $f$ is a newform in $S_2(\Gamma_0(N),\varepsilon)$, and the
  Nebentypus $\varepsilon$ is not trivial, then the coefficient field
  $K_f$ is a CM extension of $\Q$, and the pair
  $(c,\varepsilon^{-1})$, where $c$ denotes complex conjugation, is an
  element of $\Gamma_f$ (see \cite[Example 3.7]{ribet1980twists}).
\end{exm}

\begin{lemma}
  \label{lemma:conductor-bound}
  If $(\gamma, \chi) \in \Gamma_f$ then the conductor of $\chi$
  divides $8N$.
\end{lemma}
\begin{proof}
  Comparing the Nebentypus of $\gamma(f)$ and $f \otimes \chi$ we get
  the well known
  relation
  \begin{equation}
    \label{eq:twist-neb}
    \chi^2 = \gamma(\varepsilon)/\varepsilon. 
  \end{equation}
  In particular the conductor of $\chi^2$ divides $N$. If $p$ is an
  odd prime, the valuation at $p$ of the conductor of $\chi^2$ is
  either $0$ (so the valuation at $p$ of the conductor of $\chi$ is at
  most $1$), or it equals the valuation at $p$ of the conductor of
  $\chi$. But the conductor of $\chi$ is supported at primes dividing
  $N$ (because $f \otimes \chi$ has the same level $N$ as $f$) so we
  conclude that the $p$-valuation of its conductor is at most $v_p(N)$. At the
  prime 2, if $\chi^2$ is unramified at $2$, then the conductor of
  $\chi$ has valuation at most $3$ at $2$; otherwise, the valuation at
  $2$ of the conductor of $\chi^2$ is one less than that of $\chi$.
\end{proof}

Let $\End^0(A_f)\vcentcolon =\End(A_f) \otimes \Q$ denote the algebra
of endomorphisms defined over the algebraic closure of $\Q$. If $L$ is
an extension of $\Q$, let $\End^0_L(A_f) := \End_L(A_f) \otimes \Q$,
where $\End_L(A_f)$ denotes the ring of endomorphisms of $A_f$ defined
over $L$.  An important consequence of the main results of
\cite{ribet1980twists} is that the endomorphism algebra $\End^0(A_f)$
can be computed in terms of the group of inner twists $\Gamma_f$.
Concretely, in the proof of \cite[Theorem 5.1]{ribet1980twists} Ribet
constructs for each inner twist $(\gamma,\chi) \in \Gamma_f$ an
endomorphism $\eta_{\gamma}$ and proves that the algebra generated by
$K_f$ and the endomorphisms $\eta_{\gamma}$ (for all
$\gamma \in \Gamma_f$) equals $\End^0(A_f)$ (see page 59 of
\cite{ribet1980twists}).
	
The construction of $\eta_{\gamma}$ given by Ribet is as follows: let
$(\gamma,\chi) \in \Gamma_f$ be an inner twist. Let $\ord$ be the
order of $\chi$ and let $\cond$ be its conductor.
As mentioned in Ribet's article, without loss of generality, we can
assume that $\cond^2 \mid N$. Otherwise, let $h$ be the modular form
\[
  h = \sum_{(n,r)=1} a_n(f) q^n.
\]
Then $h$ is a modular form of level $r^2N$ (as proved for example in
\cite[\S 3, Proposition 17]{MR1216136}). In \cite{ribet1980twists}
Ribet proves (see Proposition 2.2 and page 57) that the abelian
variety $A_f$ is isogenous over $\Q$ to the abelian variety $A_h$,
hence it is enough to define the endomorphism $\eta_\gamma$ on $A_h$
(see \S 5 of \cite{ribet1980twists} for extra details).

The first stated property at the beginning of the section implies that
$K_f$ contains the $\ord$-th roots of unity, so for any integer $u$,
the value $\chi^{-1}(u)$ is an endomorphism of $A_f$ (defined over
$\Q$). Then (as in page 57 of \cite{ribet1980twists}) let
\begin{equation}
  \label{eq:endom}
  \eta_\gamma \vcentcolon  = \sum_{u \pmod \cond} \chi^{-1}(u) \circ \alpha_{u/\cond},  
\end{equation}
where $\alpha_{u/\cond}$ is the endomorphism corresponding to slashing
by the matrix
$\left(\begin{smallmatrix} 1 & \frac{u}{\cond}\\ 0 &
    1\end{smallmatrix}\right)$ in the space $S_2(\Gamma(N))$ (see \S 2
of \cite{ribet1980twists} and also \S 4 of \cite{MR318162}). The
hypothesis $\cond^2 \mid N$ is needed for the slashing operator to
preserve the space.
	
As explained in Ribet's article, the endomorphism $\eta_\gamma$ is
defined over the field of $\cond$-th roots of unity (because the map
$\alpha_{u/\cond}$ is defined over such a field (as explained in \S 4
of \cite{MR318162}, see also \S 6 of \cite{MR1291394}). The following
result is well known to experts, but we did not find a proper
reference for its proof. Let $\Q(\zeta_\cond)$ be the field of
$\cond$-th roots of unity. Identify $\chi$ with a character of
$\Gal(\Q(\zeta_\cond)/\Q)$ and let $\overline{\Q}^\chi$ be the field
fixed by the kernel of $\chi$.
\begin{lemma}
  The endomorphism $\eta_\gamma$ is defined over $\overline{\Q}^\chi$.
  \label{lemma:fielddefinition}
\end{lemma}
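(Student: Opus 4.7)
The plan is to compute the Galois action on $\eta_\gamma$ directly and then read off its field of definition from the answer. Since each operator $\alpha_{u/r}$ in the defining formula~(\ref{eq:endom}) is already defined over $\Q(\zeta_r)$, and each scalar $\chi^{-1}(u) \in K_f \subseteq \End^0_\Q(A_f)$ is fixed by the whole absolute Galois group (it lies in the $\Q$-rational endomorphism algebra, hence acts as a $\Q$-rational scalar), one sees a priori that $\eta_\gamma$ is defined over $\Q(\zeta_r)$. Hence it suffices to understand the action of $\Gal(\Q(\zeta_r)/\Q) \cong (\Z/r\Z)^\times$, and the goal is to show that precisely the subgroup $\ker(\chi)$ acts trivially on $\eta_\gamma$.

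For $t \in (\Z/r\Z)^\times$, let $\sigma_t$ denote the element of $\Gal(\Q(\zeta_r)/\Q)$ with $\sigma_t(\zeta_r) = \zeta_r^t$. The key input is the Galois covariance formula
\[
\sigma_t(\alpha_{u/r}) \;=\; \alpha_{tu/r},
\]
which reflects the fact that $\alpha_{u/r}$ acts on $q$-expansions by $q^n \mapsto \zeta_r^{nu} q^n$ (equivalently, it is the correspondence induced by the translation $\tau \mapsto \tau + u/r$ on the upper half plane). Applying $\sigma_t$ to~(\ref{eq:endom}) and making the substitution $v \equiv tu \pmod r$ yields
\[
\sigma_t(\eta_\gamma) \;=\; \sum_{u \pmod r} \chi^{-1}(u)\,\alpha_{tu/r} \;=\; \sum_{v \pmod r} \chi^{-1}(t^{-1}v)\,\alpha_{v/r} \;=\; \chi(t)\,\eta_\gamma,
\]
where in the last equality $\chi(t) \in K_f$ acts as scalar multiplication on $A_f$ via the embedding $K_f \hookrightarrow \End^0_\Q(A_f)$.

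Once this identity is in hand the conclusion is immediate: assuming $\eta_\gamma \neq 0$ (otherwise the statement is vacuous), the relation $\sigma_t(\eta_\gamma) = \chi(t)\,\eta_\gamma$ in the semisimple algebra $\End^0(A_f)$ shows that $\sigma_t$ fixes $\eta_\gamma$ if and only if $\chi(t)=1$, i.e.\ precisely when $\sigma_t \in \ker(\chi)$ after identifying $\chi$ with a character of $\Gal(\Q(\zeta_r)/\Q)$. The stabilizer of $\eta_\gamma$ in $\Gal(\overline{\Q}/\Q)$ is therefore the preimage of $\ker(\chi)$, and by Galois descent $\eta_\gamma$ descends to the fixed field $\overline{\Q}^\chi$. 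The main obstacle in turning this plan into a formal proof is not the algebraic reindexing but the rigorous justification of the covariance formula $\sigma_t(\alpha_{u/r}) = \alpha_{tu/r}$: the cleanest route is to realize $\alpha_{u/r}$ concretely as a morphism between modular Jacobians at a suitable level divisible by $r$ and to use the action of Galois on the cusps, where the $q$-expansion principle makes the action of $\sigma_t$ transparent.
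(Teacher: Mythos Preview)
Your argument is correct and is essentially identical to the paper's own proof: both compute $\sigma_t(\eta_\gamma)$ via the covariance $\sigma_t(\alpha_{u/r})=\alpha_{tu/r}$, reindex the sum to obtain $\sigma_t(\eta_\gamma)=\chi(t)\,\eta_\gamma$, and conclude that the stabilizer is $\ker(\chi)$. The only differences are cosmetic---you spell out the nonvanishing caveat and flag the justification of the covariance formula, which the paper simply asserts.
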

\begin{proof}
  We know that $\eta_\gamma$ is defined at least over
  $\Q(\zeta_\cond)$. Let $\sigma \in \Gal(\Q(\zeta_\cond)/\Q)$, say
  $\sigma(\zeta_\cond) = \zeta_\cond^i$. If $f \in S_2(\Gamma_1(N))$
  then $\sigma(\alpha_{u/r}(f)) = \alpha_{iu/r}(\sigma(f))$ (by
  looking at the $q$-expansion). This relation implies the relation
  $\sigma(\alpha_{u/\cond}) = \alpha_{iu/\cond}$ as endomorphisms of
  $A_f$. Since the endomorphism $\chi(u)$ is defined over $\QQ$,
  \[
    \sigma(\eta_\gamma) = \sum_{u\pmod \cond}\chi^{-1}(u) \circ
    \sigma(\alpha_{u/\cond}) = \chi(i) \sum_{v \pmod \cond}
    \chi^{-1}(v)\circ \alpha_{{v/\cond}},
  \]
  where the second equality comes from the change of variables $v=iu$.
  The result follows from the fact that $\chi(i)=1$ if and only if
  $\sigma$ restricted to $\overline{\Q}^\chi$ is the identity.
\end{proof}
	
It was already known to Hecke (see~\cite{MR0371577}, p. 811, Satz 1
and Satz 2) that two modular forms of weight $k$ whose first
coefficients coincide (up to an explicit bound $C$ depending on the
level and the weight of the two forms) must be equal. For later
purposes, we need a little variant of Hecke's result, whose elegant
proof was communicated to us by Professor Gabor Wiese. 
	
\begin{lemma}
  \label{lemma:hecke}
  Let $f\in S_{k}(\Gamma_1(N))$ and $g\in S_{k}(\Gamma_1(\tilde{N}))$
  be newforms. Let $S$ be a finite set of primes. There exists a
  constant $C$ depending only on $N, \tilde{N}, k$ and $S$, such that if
  $a_q(f)=a_q(g)$ for all primes $q \le C$, $q \not \in S$, then
  $f=g$.
\end{lemma}
	
\begin{proof}
  Let $\chi$ be a character whose conductor is divisible by all primes
  in $S$. Applying Hecke's result to $f \otimes \chi$ and
  $g \otimes \chi$, there exists a constant $C$ (depending only on
  $k, N,\tilde{N}$ and the primes in $S$) such that if
  $a_n(f) \chi(n) = a_n(g)\chi(n)$ for all $n \le C$, then
  $f \otimes \chi = g \otimes \chi$. We claim that the constant $C$
  suffices for our purposes.
		
  If $a_q(f) = a_q(g)$ for all primes $q \le C$, $q \not \in S$, then
  (since $f$ and $g$ are newforms) $a_n(f) = a_n(g)$ for all positive
  integers $n \le C$ not divisible by primes in $S$, so
  $a_n(f)\chi(n) = a_n(g) \chi(n)$ for all $n \le C$ and
  $f \otimes \chi = g \otimes \chi$. Since $f$ (respectively $g$) is a
  newform,
  $f = (f\otimes \chi)\otimes\chi^{-1}=(g \otimes \chi)\otimes
  \chi^{-1}=g$.
\end{proof}
	
The following result provides a partial answer to Question 1.
	
\begin{thm}
  \label{thm:main}
  Let $g \in S_2(\Gamma_0(\tilde{N}),\varepsilon)$ be a newform
  without complex multiplication.  There exists a constant $M_g$
  (depending only on $g$) such that if
  $f \in S_2(\Gamma_0(N),\varepsilon)$ is any newform without complex
  multiplication, satisfying the conditions:
  \begin{enumerate}
  \item The coefficient field $K_f$ of $f$ equals the coefficient
    field $K_g$ of $g$.
			
  \item $\tilde{N} \mid N$ and $N/\tilde{N}$ is square-free.
			
  \item There exists a prime $p>M_g$ and a prime ideal $\id{p}$ of
    $K_g$ dividing $p$ such that the Galois representations
    $\bar{\rho}{_{f,\id{p}}}$ and $\bar{\rho}{_{g,\id{p}}}$ are
    isomorphic.
  \end{enumerate}
  Then for any field extension $L/\Q$ there exists an injective
  morphism between the $\Q$-algebras
  \[
    \psi: \End_L^0(A_f) \to \End_L^0(A_g).
  \]
  Moreover, if $L/\Q$ is Galois, then the morphism $\psi$ is also a
  morphism of $\Gal(L/\Q)$-modules.
\end{thm}

\begin{proof}
  The endomorphism algebra $\End^0(A_f)$ is generated by $K_f$ and by
  the endomorphisms $\eta_\gamma$ for $(\gamma,\chi) \in \Gamma_f$
  (the latter defined over the field $\overline{\Q}^\chi$ by
  Lemma~\ref{lemma:fielddefinition}). If we prove that the set
  $\Gamma_f$ is contained in the set $\Gamma_g$, then the morphism
  $\psi$ we seek for sends the endomorphism $\eta_{\gamma}$ of $A_f$
  to the endomorphism $\eta_{\gamma}$ of $A_g$; this morphism is
  clearly injective and Galois equivariant. The key point to prove the
  inclusion of the inner twists groups is the fact that the group
  $\Gamma_f$ is defined in terms of a property of Fourier
  coefficients.
		
  Let $S$ be the set of primes dividing $2\tilde{N}$ and let $M$ be
  the set of characters $\theta$ whose conductor is supported at
  primes dividing $\tilde{N}$ with the property that there exist
  $\gamma \in \Gamma_f$ satisfying (\ref{eq:twist-neb}).
  Lemma~\ref{lemma:conductor-bound} implies that the set $M$ is
  finite. If $\theta \in M$  and
  $\gamma \in \Hom_\Q(K_g,\CC)$ Lemma~\ref{lemma:hecke} implies
  the existence of a constant $C_\theta$ (depending only on
  $\tilde{N}$, $\theta$ and $S$) such that if
  \begin{equation}
    \label{eq:innertwistg}
    a_q(\gamma(g))=\theta(q)a_q(g), \quad \forall q\le C_\theta, \; q\notin S,
  \end{equation}	
  then $\gamma(g)=g \otimes \theta$. Define the constant
  \[
    M_g:=\max_{\theta \in M}\{C_{\theta}^2\}.
  \]
		
  Let $f$ be a newform satisfying the stated hypotheses, so in
  particular there exists a prime $p>M_g$ such that the residual
  Galois representations $\bar{\rho}{_{f,\id{p}}}$ and
  $\bar{\rho}{_{g,\id{p}}}$ are isomorphic for some prime ideal
  $\id{p}$ of $K_g$ dividing $p$.  Let $(\gamma,\chi)\in \Gamma_f$, so
  for all primes $q$ not dividing $2N$,
  \begin{equation}
    \label{eq:1}
    \gamma(a_q(f)) = \chi(q) a_q(f).    
  \end{equation}
  Let $p$ be a prime dividing $N$ but not dividing $\tilde{N}$. Since
  $\varepsilon$ (the Nebentypus of $f$) is unramified at $p$,
  (\ref{eq:twist-neb}) implies that either $\chi$ is unramified at
  $p$, or it is ramified at $p$ but its square is not. We claim that
  the second hypothesis implies that the latter case cannot occur.

  The eigenform $f$ has associated an automorphic representation
  $\pi_f$ of the group $\GL_2(\AA_\Q)$, where $\AA_\Q$ denotes the
  ad\`ele ring of $\QQ$. The representation $\pi_f$ factors as a
  restricted tensor product of components $\pi_{f,v}$ over places $v$
  of $\Q$.  The second hypothesis implies that if $p \mid N$ and
  $p \nmid \tilde{N}$, then the local component $\pi_{f,p}$ is a
  Steinberg representation. Then the level of the twisted modular form
  $f \otimes\chi$ has valuation $1$ at $p$ if $\chi$ is unramified at
  $p$ or $2$ if $\chi$ is ramified at $p$ when $p$ is odd. When $p=2$
  and $\chi$ is ramified at $2$, the valuation at $2$ of the level of
  $f \otimes \chi$ equals $2$ or $6$. Since $\gamma(f)$ has the same
  level as $f$, $\chi$ must be unramified at $p$, proving the
  claim. Then $\chi \in M$, and it is enough to prove the equality
  \begin{equation}
    \label{eq:2}
    \gamma(a_q(g)) = \chi(q) a_q(g),\quad \forall q \le \sqrt{M_g}, q \not \in S,    
  \end{equation}
  to deduce that $(\gamma,\chi) \in \Gamma_g$.  Let $q\le \sqrt{M_g}$
  be a prime number not in $S$. There are two possibilities: either
  $q$ divides $N$ or it does not. If $q\nmid N$, Remark
  \ref{remark:primeslevel} implies that
  \[
    \gamma(a_q(f)) = \chi(q) a_q(f).
  \]
  The third hypothesis implies that
  \begin{equation}\label{eq:cong}
    a_q(g)\chi(q)\equiv a_q(f)\chi(q)\pmod{\id{p}} \ \text{ and } \ \gamma(a_q(g))\equiv \gamma(a_q(f))\pmod{\gamma(\id{p})}.
  \end{equation}
  By the Ramanujan-Petersson conjecture (proved in \cite{MR340258}),
  for all embeddings $\sigma:K_g \to \CC$,
  $|\sigma(a_q(g))| \le 2\sqrt{q}$. Then, since
  \[
    \text{Norm}(\id{p}), \text{Norm}(\gamma(\id{p}))\ge p>M_g\ge
    4\sqrt{q},
  \]
  both sides of each congruence of~(\ref{eq:cong}) are in fact equal,
  so equation (\ref{eq:2}) follows from (\ref{eq:1}).
		
  Suppose then that $q\mid N$ but $q \nmid 2\tilde{N}$ (since
  $q \not \in S$). Then hypotheses (2) and (3) imply that $q$ is a
  prime of ``level lowering'' for $f$ modulo $\id{p}$ so
  \[
    a_q(g)^2\equiv \varepsilon(q)(q+1)^2 \pmod{\id{p}}.
  \]
  Since the absolute value of the left hand side is bounded by
  $4q$, both sides of the congruence are not equal. Then $p$
  must divide their difference, which is bounded by $(\sqrt{q}+1)^2$,
  giving a contradiction, since
  $\text{Norm}(\id{p}) \ge p>M_g\ge q^2> (\sqrt{q}+1)^2$ (since $q$ is
  odd). Then this last case cannot happen.
\end{proof}
	
\section{Applications to the equations $x^4+dy^2=z^p$ and
  $x^2+dy^6=z^p$}
\subsection{Decomposing the abelian variety attached to $\E$}
\label{section:decomposition}
Let us start this section recalling some general basic definitions
(see for example the second chapter of~\cite{MR2693262}). Let $A$ be
an abelian variety of $\GL_2$-type and let $B$ be a simple component
(over $\overline{\Q}$) of $A$.
\begin{defi}
  The simple abelian variety $B$ is a \emph{building block} of $A$ if
  it satisfies:
  \begin{itemize}
  \item The variety $B$ is a $\Q$-variety, i.e.\ it is isogenous to
    all of its Galois conjugates,
			
  \item The endomorphism algebra $\End^0(B)$ is either a totally real
    field $F$ of degree $[F:\Q] = \dim B$ or a totally indefinite
    quaternion algebra over a totally real field $F$ of degree
    $[F:\Q]=\frac{1}{2}\dim B$.
  \end{itemize}
\end{defi}
	
\begin{defi}
  Let $L$ be a number field. A building block $B$ of the variety $A$
  is \emph{totally defined} over $L$ if the abelian variety $B$ is
  defined over $L$, all the isogenies between $B$ and its Galois
  conjugates are defined over $L$ and all of its endomorphisms are
  defined over $L$ as well.
\end{defi}
	
Let $(a,b,c)$ be a non-trivial primitive solution of~(\ref{eq:42p})
for $d \neq 1$ and let $\E$ be the elliptic curve over
$K=\Q(\sqrt{-d})$ defined in (\ref{eq:curve-E}).
	
As explained in the introduction, there exists a finite order Hecke
character $\varkappa$ (whose construction is given in \S 3 of
\cite{PT}) of $K$ unramified outside $2$ and primes ramifying in
$K/\Q$ such that the twisted representation
$\rho_{\E,p}\otimes \varkappa$ extends to a representation
$\tilde{\rho}:\Gal_\Q \to \GL_2(\overline{\Q_p})$. Let
$f \in S_2(\Gamma_0(N),\varepsilon)$ be the newform attached to
$\tilde{\rho}$ (see \cite{PT} for a description of the Nebentypus
$\varepsilon$) and $A_f$ be the $\GL_2$-type abelian variety
constructed via the Eichler-Shimura map.
	
Over $\overline{\Q}$ the variety $A_f$ is isogenous to a product of
simple abelian varieties, $A_f \sim B_1 \times \cdots \times B_k$,
each variety $B_i$ being a building block of $A_f$ as defined
before. In the particular case of abelian varieties coming from
newforms, all building blocks are isogenous to each other, so in
particular $A_f \sim B^k$ (see \cite{MR2448720}).
	
\begin{lemma}
  \label{lemma:no-CM}
  The curve $\E$ does not have complex multiplication if $p>2$.
\end{lemma}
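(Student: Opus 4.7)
The plan is to assume $\E$ has complex multiplication and derive a contradiction. The key observation is that a CM elliptic curve defined over the imaginary quadratic field $K = \Q(\sqrt{-d})$ must have $j$-invariant in $\Q$; then an explicit computation shows that $j(\E) \in \Q$ is equivalent to a Diophantine constraint which is incompatible with the existence of a non-trivial primitive solution when $p > 2$.

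First I would justify the preliminary fact. The Hilbert class polynomial of any CM order $\Om$ is irreducible over $\Q$ of degree $h = h(\Om)$, so the condition $j(\E) \in K$ forces $h \leq 2$. For $h = 1$ we have $j(\E) \in \Q$ directly. For $h = 2$, complex conjugation acts on $\Cl(\Om)$ by inversion $\mathfrak{a} \mapsto \bar{\mathfrak{a}} = \mathfrak{a}^{-1}$; since inversion is the identity on a group of exponent $2$, $j(\E)$ is fixed by complex conjugation and hence real. But $K$ is imaginary quadratic, so $K \cap \RR = \Q$, forcing $j(\E) \in \Q$ in this case as well.

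Next I would compute from equation~(\ref{eq:curve-E}) the invariants
\[
c_4(\E) = 32\,(5a^2 - 3b\sqrt{-d}), \qquad \Delta(\E) = 2^9\,(a^2 + b\sqrt{-d})^2\,(a^2 - b\sqrt{-d}).
\]
Using the identity $(a^2+b\sqrt{-d})(a^2-b\sqrt{-d}) = c^p$ and expanding, a direct algebraic computation yields
\[
j(\E) - j(\E)^\sigma \;=\; -\,\frac{256\, a^2\, b\, \sqrt{-d}\,(175 a^4 - 81\, d\, b^2)}{c^{2p}},
\]
where $\sigma$ denotes the nontrivial element of $\Gal(K/\Q)$. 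Since non-triviality of the solution gives $ab \neq 0$, the condition $j(\E) \in \Q$ is equivalent to the Diophantine relation $175 a^4 = 81\, d\, b^2$.

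Finally I would combine this relation with $c^p = a^4 + d b^2$ to obtain $81\, c^p = 256\, a^4 = (4a)^4$, which forces $3 \mid a$. Writing $a = 3a_1$, the equality reduces to $c^p = (4a_1)^4$, and for an odd prime $p$ unique factorization in $\Z$ yields $c = m^4$ and $4a_1 = m^p$ for some positive integer $m$, forcing $2 \mid m$ and hence $2 \mid a$, $2 \mid c$. Tracing $2$-adic valuations through $175 a^4 = 81\, d\, b^2$ under the mild assumption that $d$ is squarefree (satisfied in all intended applications, where $d$ is a prime) shows that $2 \mid b$ as well, contradicting the primitivity $\gcd(a,b,c) = 1$. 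An alternative route is to find a prime of multiplicative reduction of $\E$ (which exists whenever $c$ has an odd prime factor, by the discussion in the introduction) and use that CM curves have potentially good reduction everywhere; however, the $j$-invariant approach handles uniformly the edge case in which $c$ is a pure power of $2$.

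The main obstacle is the algebraic identity for $j(\E) - j(\E)^\sigma$, which must be expanded carefully, together with the $2$-adic bookkeeping in the final step; both are routine but require attention.
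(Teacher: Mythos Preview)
Your argument is correct and follows the same strategy as the paper: both reduce to showing $j(\E)\in\Q$ and then extract the relation $175a^4=81\,db^2$ (your identity for $j-j^\sigma$ checks out). The only difference is in the endgame. The paper observes directly from $175a^4=81\,db^2$ with $d$ squarefree that $d/7$ must be a rational square, hence $d=7$; primitivity then forces $(a,b)=(\pm3,\pm5)$ and $c^p=256=2^8$, so $p=2$. Your route via $81c^p=(4a)^4$, the factorisation $c=m^4$, $4a_1=\pm m^p$, and $2$-adic bookkeeping is a little longer but equally valid; note that when $d$ is even and squarefree the equation $4v_2(a)=1+2v_2(b)$ is already a parity contradiction rather than giving $2\mid b$, so your final sentence should be phrased accordingly (this does not affect the conclusion).
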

	
\begin{proof}
  Since $K$ is an imaginary quadratic field, if $\E$ has complex
  multiplication, then its $j$-invariant must be a rational number (in
  particular, a real one).
		
  The $j$-invariant of the elliptic curve $\E$ equals
  $j=\frac{64(5a^2-3b\sqrt{-d})^3}{c^p(a^2+b\sqrt{-d})}$.  Since
  $(a,b,c)$ is a non-trivial solution, $a$ and $b$ are non-zero, so
  $j$ is a real number if and only if
  \begin{equation*}
    \begin{cases}
      jc^p=8000a^4-8640db^2\\
      jc^p=-14400a^4+1728db^2.
    \end{cases}
  \end{equation*}
  Subtracting both equations gives the relation
  \[
    175a^4 = 81db^2,
  \]
  hence $\frac{a^2}{b} = \pm \frac{9}{5} \sqrt{d/7}$. Since $d$ is
  square-free, and both $a,b$ are integers, $d=7$ and
  $(a,b)=(\pm3, \pm5)$. Since $c^p = a^4+db^2=256 = 2^8$ we get that
  $p=2$ and $c =\pm 16$.
\end{proof}
	
\begin{lemma}
  Let $r = [K_f:\Q]$. Then the endomorphism algebra $\End^0(A_f)$ is
  isomorphic to $M_r(\Q)$.
\end{lemma}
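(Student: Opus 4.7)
The plan is to compute $\End^0(A_f)$ over $\overline{\Q}$ by combining two ingredients already at hand: the known decomposition of $A_f$ into mutually isogenous building blocks, and the absence of complex multiplication for $\E$ established in Lemma~\ref{lemma:no-CM}.

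First, I would invoke the general result cited in the paragraph immediately preceding the lemma (attributed to \cite{MR2448720}): when $A_f$ is the Eichler-Shimura variety attached to a newform, all of its building blocks are pairwise isogenous over $\overline{\Q}$, so there exist a simple abelian variety $B$ and an integer $s \ge 1$ with $A_f \sim B^s$ over $\overline{\Q}$. Combined with the observation, made just above the lemma, that the building block of $A_f$ is the elliptic curve $\E$, this upgrades to $A_f \sim \E^s$ over $\overline{\Q}$. Comparing dimensions, with $\dim A_f = [K_f : \Q] = r$ and $\dim \E = 1$, forces $s = r$.

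Next, Lemma~\ref{lemma:no-CM} (which applies in the regime $p > 2$ that we care about) implies that $\E$ has no complex multiplication, so $\End^0(\E) \simeq \Q$. The conclusion then follows from the standard identification $\End^0(E^r) \simeq M_r(\End^0(E))$ valid for any elliptic curve $E$ and any $r \ge 1$, together with the fact that isogenous abelian varieties have canonically isomorphic endomorphism algebras after tensoring with $\Q$. Concretely, $\End^0(A_f) \simeq \End^0(\E^r) \simeq M_r(\End^0(\E)) \simeq M_r(\Q)$, as required. I do not anticipate any serious obstacle: the statement is essentially a bookkeeping consequence of the building-block decomposition and the non-CM property of $\E$.
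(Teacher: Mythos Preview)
Your proposal is correct and matches the paper's approach exactly: the paper's proof is the single sentence ``Follows from the fact that the building block of $A_f$ is the elliptic curve $\E$, of dimension $1$ and which does not have complex multiplication,'' and you have simply spelled out the standard details (dimension count to get $s=r$, $\End^0(\E)\simeq\Q$ from non-CM, and $\End^0(\E^r)\simeq M_r(\Q)$) behind that sentence.
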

\begin{proof}
  Let $L=\overline{K}^{\varkappa}$. Then there are isomorphisms of
  Galois representations
  $\rho_{A_f,p}|_{\Gal_L} \simeq (\rho_{f,p}|_{\Gal_L})^r \simeq
  (\rho_{\E,p}|_{\Gal_L})^r$, so Faltings' isogeny theorem (see
  Corollary 1 in page 21 of \cite{MR861971}) implies that $A_f$ is
  isogenous to $(\E)^r$, hence the elliptic curve $\E$ is a building
  block of $A_f$ which does not have complex multiplication.
\end{proof}
	
\begin{remark}
  The elliptic curve $\E$ is a building block of $A_f$ defined over
  $K$, but it is totally defined over $K(\sqrt{-2})$.
  \label{rem:ell-curve}
\end{remark}
	
\begin{remark}
  Since the center of $\End^0(A_f)$ is the field of rational numbers
  $\Q$, Ribet's result (Theorem 5.1 and the remark before Proposition
  3.5 of \cite{ribet1980twists}) implies that the field generated by
  the numbers $a_p(f)^2 \varepsilon(p)^{-1}$ for $p$ not dividing the
  level of $f$ is the rational one. Let us just verify that this is
  indeed the case (because a similar computation will be needed
  later).
		
  If $K$ is a number field, we denote by $\II_K$ its id\`ele ring. A
  key property of the characters $\varkappa$ and $\varepsilon$ is that
  as characters of the respective id\`ele group, they satisfy the
  relation
  \begin{equation}
    \label{eq:relation}
    \varkappa^2 = \varepsilon \circ \norm,  
  \end{equation}
  where $\norm:\II_K \to \II_\Q$ is the norm map (see \cite{PT}, page
  2831). Consider the following two cases:
  \begin{itemize}
  \item If the prime $p$ splits, say $p = \frak{p}_1 \frak{p}_2$, then
    relation~(\ref{eq:relation}) translates into
    $\varkappa (\frak{p}_1) ^2 = \varepsilon(p) $. Then
    \[
      a_p(f)^2 \varepsilon(p)^{-1} = \left(a_{\frak{p}_1}(E)
        \varkappa(\frak{p}_1)\right)^2 \varepsilon(p)^{-1}=
      a_{\frak{p}_1}(E)^2 \in \QQ.
    \]
  \item If the prime $p$ is inert, relation~(\ref{eq:relation})
    implies that
    $\varkappa (p) ^2 = \varepsilon(p^2) = \varepsilon(p)^2 $. Then
    using the relation between $a_p(f)$ and $a_p(E)$,
    \[
      a_p(f)^2 \varepsilon(p)^{-1} =
      a_p(E)\varkappa(p)\varepsilon(p)^{-1}+2p = \pm a_p(E) -2p \in
      \QQ.
    \]
  \end{itemize}
\end{remark}
	
\vspace{5pt}
	
The elliptic curve $\E$ has discriminant
$\Disc(\E) = 512(a^2+b\sqrt{-d})c^p$. From the equality
\[
  (a^2+b\sqrt{-d})(a^2-b\sqrt{-d})=a^4+db^2=c^p
\]
and the hypothesis that $(a,b,c)$ is a primitive solution (so any
prime ideal dividing both $a^2+b\sqrt{-d}$ and $a^2-b\sqrt{-d})$ must
divide $2$) it follows that $(a^2+b\sqrt{-d})$ is a $p$-th power
outside $2$, and the same holds for $\Disc(\E)$. Let $q \nmid 2d$ be a
prime number. If $q \mid c$ and $q \neq p$, then the curve $\E$ has
multiplicative reduction at the primes dividing $q$ (in $K$), and the
residual representation $\bar{\rho}{_{\E,p}}$ is unramified at (the
primes dividing) $q$. Since $\varkappa$ is unramified at $q$ (by
construction), the same holds for
$\overline{\rho_{\E,p} \otimes \varkappa}$.  If $q = p$, then the
residual representation corresponds to a finite flat group scheme (or
equivalently, its Serre's weight equals $2$) by a result due to
Hellegouarch, and since $\varkappa$ is unramified at (primes dividing)
$p$, the same holds for the twisted representation. Since the
extension $K/\Q$ is unramified at primes not dividing $2d$, the
residual representation of $\tilde{\rho}=\rho_f$ is also unramified at
all primes $q$ not dividing $2d$.
	
There exists an explicit bound $N_K$ such that the residual Galois
representation $\bar{\rho}{_{\E,p}}$ is absolutely irreducible for all
primes $p > N_K$ (see \cite[Theorem 5.1] {PT} and \cite[Proposition
3.2]{MR2075481}).  Then by Ribet's lowering the level result applied
to $f$, there exists a newform
$g \in S_2(\Gamma_0(\tilde{N}), \varepsilon)$, where $\tilde{N}$ is a
positive integer only divisible by $2$ and by the primes dividing $d$,
such that $\bar{\rho}{_{f,\id{p}}} \simeq \bar{\rho}{_{g,\id{P}}}$,
where $\id{p}$ and $\id{P}$ are some primes in $K_f$ and $K_g$
(respectively) dividing $p$. As explained in the introduction, it will
be sufficient to consider the case when $K_f=K_g$ and
$\id{p}=\id{P}$. Note in particular that the value of $\tilde{N}$ is
independent of the solution $(a,b,c)$ we started with.
	
Suppose that the form $g$ does not have complex multiplication.  Let
$A_g$ be the abelian variety attached to the newform $g$ by
Eichler-Shimura's construction. An immediate consequence of
Theorem~\ref{thm:main} is the following result.
	
\begin{prop}
  \label{prop:decomposition}
  Suppose that $K_f = K_g$. There exists a constant $B$ (depending
  only on $\tilde{N}$) such that if $p>B$, then we have
  $\End^0(A_g) \simeq \End^0(A_f)$ $\simeq M_r(\Q)$. In particular,
  the building block of the abelian variety $A_g$ has dimension $1$.
\end{prop}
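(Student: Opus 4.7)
The plan is to apply Remark~\ref{rem:inclusion} to the fixed level $\tilde{N}$ and then use the structure of $\End^0(A_f)$ given by the preceding lemma to force an isogeny decomposition of $A_g$ into elliptic curves. Since $g$ is assumed to have no complex multiplication, this pins down $\End^0(A_g)$ exactly.

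First, I would invoke Remark~\ref{rem:inclusion}: because $\tilde{N}$ is fixed, there is a bound $B$ depending only on $\tilde{N}$ such that for $p > B$ one has an embedding $\End^0(A_f) \hookrightarrow \End^0(A_g)$. By the preceding lemma, $\End^0(A_f) \simeq M_r(\Q)$ where $r = [K_f:\Q] = [K_g:\Q] = \dim A_g$. Hence we obtain an embedding $M_r(\Q) \hookrightarrow \End^0(A_g)$.

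Next, I would exploit the diagonal matrix units $e_{11},\dots,e_{rr} \in M_r(\Q) \subseteq \End^0(A_g)$. These are pairwise orthogonal idempotents summing to the identity, so they induce an isogeny decomposition $A_g \sim B_1 \times \cdots \times B_r$; moreover, the $B_i$ are mutually isogenous, since the $e_{ii}$ are conjugate in $M_r(\Q)$. To pin down the dimension of each $B_i$, observe that $M_r(\Q)$ is simple, so its faithful action on $H_1(A_g,\Q) \cong \Q^{2r}$ forces the $M_r(\Q)$-module isomorphism $H_1(A_g,\Q) \cong (\Q^r)^{\oplus 2}$. Consequently each $e_{ii}$ has image of $\Q$-dimension $2$, and hence $\dim B_i = 1$. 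This already proves the second assertion: the building block of $A_g$ has dimension one.

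To deduce the isomorphism of endomorphism algebras, write $A_g \sim B^r$ for the common elliptic building block $B$, so that $\End^0(A_g) = M_r(\End^0(B))$. Since $g$ has no complex multiplication, $A_g$ has no complex multiplication, which forces $B$ to be an elliptic curve without complex multiplication, whence $\End^0(B) = \Q$. Therefore $\End^0(A_g) \simeq M_r(\Q) \simeq \End^0(A_f)$. The subtlety to keep an eye on is ensuring that the bound $B$ does not depend on $N$: this is precisely the content of Remark~\ref{rem:inclusion}, which holds because the proof of Theorem~\ref{thm:main} applies Hecke's effective bound only to the form appearing on the right-hand side of the claimed inclusion of inner twist groups, so proving $\Gamma_f \subseteq \Gamma_g$ only invokes Hecke's bound on $g$ itself.
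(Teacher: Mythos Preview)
Your proof is correct and follows essentially the same line as the paper's: invoke Remark~\ref{rem:inclusion} to get $M_r(\Q)\subset\End^0(A_g)$, then use the no-CM hypothesis on $g$ to conclude equality. The paper compresses your idempotent argument into the single sentence ``Since the form $g$ does not have complex multiplication, the previous inclusion is actually an equality,'' whereas you unpack this via the explicit decomposition $A_g\sim B^r$ and the identification $\End^0(A_g)=M_r(\End^0(B))$; both are the same reasoning at different levels of detail.
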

\begin{proof}
  From the theory of building blocks, we know that there exists a
  simple abelian variety $E$ such that $A_g \sim E^t$ hence
  $\End^0(A_g)\simeq M_t(\End^0(E))$.  Theorem~\ref{thm:main} implies
  that $\End^0(A_f) \subset \End^0(A_g)$ (over $\overline{\Q}$ under
  the map $\psi$), so $M_r(\Q) \subset M_t(\End^0(E))$, where
  $r = [K_f:\Q] = \dim(A_g)=t\dim(E)$, implying that $r \le t$ (since
  $\End^0(E)$ does not have zero divisors). Then $r = t$ and
  $\dim(E)=1$, i.e.\ $E$ is an elliptic curve. Since the form $g$ does
  not have complex multiplication, neither does $E$ hence
  $M_r(\Q)=\End^0(A_g)$.
\end{proof}
	
It is a natural problem to determine the minimal field of definition
(if it exists) of a building block of $A_g$ and whether it matches a
building block of $A_f$ (namely $K$).
	
\begin{thm}
  There exists a $1$-dimensional building block $E_g$ for $A_g$
  defined over the quadratic field $K$ and totally defined over
  $K(\sqrt{-2})$.
  \label{thm:fielddefinition}
\end{thm}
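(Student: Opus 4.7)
The plan is to use Theorem~\ref{thm:main} to transport a $K$-rational one-dimensional factor of $A_f$ over to $A_g$. First I would record that because $\E$ is a one-dimensional building block of $A_f$ defined over $K$---equivalently, the Eichler--Shimura construction for the $\QQ$-curve $\E$ exhibits $\E$ as a $K$-rational quotient of $A_f$---Poincar\'e complete reducibility applied over $K$ furnishes an idempotent $e_f \in \End^0_K(A_f)$ with $e_f A_f \sim_K \E$. Viewed inside $\End^0_{\overline{\QQ}}(A_f) \simeq M_r(\QQ)$ this idempotent has rank one.

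Next I would apply Theorem~\ref{thm:main} with $L = \overline{\QQ}$. The isomorphism produced there is concretely given by sending $\eta_\gamma \in \End^0_{\overline{\QQ}}(A_f)$ to the analogous $\eta_\gamma \in \End^0_{\overline{\QQ}}(A_g)$ for each $\gamma$ in the common inner twist group $\Gamma_f = \Gamma_g$; this is therefore a $\Gal(\overline{\QQ}/K)$-equivariant $\QQ$-algebra isomorphism $\Phi \colon \End^0_{\overline{\QQ}}(A_f) \to \End^0_{\overline{\QQ}}(A_g)$. Since both algebras are matrix algebras $M_r(\QQ)$, Skolem--Noether forces $\Phi$ to be inner, and in particular to preserve ranks of idempotents.

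Setting $e_g \vcentcolon= \Phi(e_f)$, the Galois equivariance of $\Phi$ places $e_g$ in $\End^0_K(A_g)$, while the rank preservation forces $\dim(e_g A_g) = 1$. Define $E_g \vcentcolon= e_g A_g$: this is a one-dimensional abelian subvariety of $A_g$ defined over $K$, i.e. an elliptic curve over $K$, and as a $K$-rational direct factor of $A_g$ (up to isogeny) it is $\overline{\QQ}$-isogenous to any simple factor of $A_g$. Since all simple factors of $A_g$ are mutually isogenous over $\overline{\QQ}$, $E_g$ is the desired building block defined over $K$.

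The delicate point is the very first step, namely producing the $K$-rational idempotent $e_f$ from the fact that $\E$ is a building block of $A_f$ defined over $K$. For $A_f$ attached to the $\QQ$-curve $\E/K$ this is standard: the projection $A_f \twoheadrightarrow \E$ coming from the modular parametrization of $\E$ is automatically defined over $K$, which is the very fact being exploited in the introduction. Once $e_f$ is in hand, everything else is a purely formal transport of structure through Theorem~\ref{thm:main}.
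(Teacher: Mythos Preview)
Your argument has a genuine gap, and it is precisely the step you flag as delicate. The assertion that the modular parametrization furnishes a $K$-rational surjection $A_f\twoheadrightarrow\E$, hence an idempotent $e_f\in\End^0_K(A_f)$ with $e_fA_f\sim_K \E$, is false. ``Building block defined over $K$'' means only that the $\overline{\Q}$-simple factor admits a model as an abstract variety over $K$; it does \emph{not} mean that the projection from $A_f$ onto that factor is $K$-rational. Concretely, equation~\eqref{eq:decomposition-Af} gives
\[
  \rho_{A_f,p}\big|_{\Gal_K} \;\simeq\; \bigoplus_{i=1}^{M/2}\rho_{\E,p}\otimes\varkappa^{2i-1}
\]
(or twice this sum), and since $M=\ord(\varkappa)$ is a power of $2$ with $M\ge 2$ (Lemma~\ref{lemma:field-diagram}), no odd power $\varkappa^{2i-1}$ is trivial; thus $\rho_{\E,p}$ itself is not a $\Gal_K$-summand of $\rho_{A_f,p}$ and there is no $K$-rational map $A_f\to\E$. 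In the case $K_f=\Q(\zeta_M)$ one even has, by Theorem~\ref{thm:inner-twists} and Lemma~\ref{lemma:fielddefinition}, that $\End^0_K(A_f)=K_f$ is a field, so $A_f$ is simple over $K$ and carries no nontrivial idempotents whatsoever. This is exactly the phenomenon highlighted in Remark~\ref{remark:building-block}.

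The paper's proof avoids this by never seeking a $K$-rational idempotent. Instead it passes to the Ribet cocycle $[c]\in\HH^2(\Gal_\Q,\Q^\times)[2]$ attached to the building block: the minimal field of definition of a model is the fixed field of the component $\overline{c}$, and by results of Ribet and Guitart--Quer this class can be computed purely from the $\Gal_\Q$-module structure of $\End^0(A_g)$. Since Theorem~\ref{thm:main} identifies $\End^0(A_f)$ and $\End^0(A_g)$ as $\Gal_\Q$-modules, the two cocycles agree and the minimal fields of definition coincide. No descent of an idempotent is ever required.
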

	
\begin{proof}
  Let $E_g$ denote any building block of $A_g$ (which is
  $1$-dimensional by the last proposition).  Recall that $E_g$ is a
  $\Q$-curve, i.e.\ the curve $E_g$ is totally defined over a Galois
  number field $L$ satisfying that for all $\sigma \in \Gal(L/\Q)$,
  the curve $\sigma(E_g)$ is isogenous to $E_g$. Let
  $\mu_\sigma \colon \sigma(E_g) \to E_g$ denote such an
  isogeny. Abusing notation (as in Ribet's article
  \cite{ribet1980twists}) we can attach to $E_g$ a map
  $c : \Gal_\Q \times \Gal_\Q \to \Q^\times$ given by
  \begin{equation}
    \label{eq:cocycle}
    c(\sigma,\tau) = \mu_\sigma \circ \sigma(\mu_\tau)\circ \mu_{\sigma \tau}^{-1},
  \end{equation}
  which is an element of $\End^0(E_g) \simeq \Q^\times$. The map $c$
  is actually a cocycle (by \cite{ribet1980twists} (5.7)). In
  particular, its class is an element of $\HH^2(\Gal_\Q,\Q^\times)$,
  whose order is at most $2$ (see \cite[Remark 5.8]{ribet1980twists}
  and \cite[Proposition 3.2]{MR1299737}). Then by \cite[Proposition
  5.2]{MR2693262} the building block $E_g$ is isogenous (over
  $\overline{\Q}$) to a building block totally defined over a field
  $F$ if and only if $[c]$ lies in the kernel of the restriction map
  $\text{Res:}\HH^2(\Gal_\Q,\Q^\times) \to \HH^2(\Gal_F,\Q^\times)$.
		
  By a result of Ribet (\cite[Corollary 4.5]{MR1299737}) the curve
  $E_g$ does have a minimum field of totally definition. Furthermore,
  it can be explicitly described (as done in the proof of
  \cite[Theorem 3.3]{MR1299737}): consider the natural isomorphism
  \[
    \Q^\times \simeq \{\pm 1\} \times \Q^\times/\{\pm 1\},
  \]
  where now the second factor is a free group that can be identified
  with the group of positive rational numbers $\Q^\times_+$. This
  induces an isomorphism
  \[
    \HH^2(\Gal_\Q,\Q^\times)[2]\simeq \HH^2(\Gal_\Q,\Q^\times_+)[2]
    \times \HH^2(\Gal_\Q,\{\pm 1\}).
  \]
  The short exact sequence
  \[
    \xymatrix{ 1 \ar[r]& \Q^\times_+ \ar[r]^{x \mapsto x^2}&
      \Q^\times_+ \ar[r]& \Q^\times_+/(\Q^\times_+)^2 \ar[r]& 1, }
  \]
  induces an isomorphism of the cohomology groups
  $\HH^2(\Gal_\Q,\Q^\times_+)[2] \simeq
  \Hom(\Gal_\Q,\Q^\times_+/(\Q^\times_+)^2)$.  Our cocycle class $[c]$
  then decomposes as a product (following Ribet's notation)
  $(\overline{c},c_{\pm})$, where
  $\overline{c} \in \HH^2(\Gal_\Q,\Q^\times_+)[2]$ and
  $c_{\pm} \in \HH^2(\Gal_\Q,\{\pm 1\})$.  The minimum field of
  totally definition $K_{\text{min}}$ for a building block equals the
  fixed field of $\overline{c}$.
		
  There is a second way to define the cocycle $[c]$ in terms of the
  $\Q$-algebra $\End^0(A_g)$ (see \cite[Chapter 1]{MR2693262}). Let
  $K_g$, as before, denote the coefficient field of $g$ (which also
  equals $\End_\Q^0(A_g)$) and let $\psi \in \End^0(A_g)$. The group
  $\Gal_\Q$ acts on the set $\End^0(A_g)$. Let us denote by
  $^\sigma \psi$ the action of $\sigma \in \Gal_\Q$ on an endomorphism
  $\psi$. Skolem-Noether's theorem implies the existence of an element
  $\alpha(\sigma) \in K_g^\times$ such that
  $^\sigma \psi = \alpha(\sigma) \circ \psi \circ \alpha(\sigma)^{-1}$
  for every $\psi \in \End^0(A_g)$. We can then define a second
  cocycle
  \[
    c (\sigma,\tau) =
    \alpha(\sigma)\alpha(\tau)\alpha(\sigma\tau)^{-1}.
  \]
  Then by \cite[Theorem 4.6]{MR2693262}, both definitions
  coincide. But by Proposition~\ref{prop:decomposition}, the
  $\Q$-algebras $\End^0(A_f)$ and $\End^0(A_g)$ are isomorphic as
  $\Gal_\Q$-modules, hence with this second definition it is clear
  that the cocycle attached to $A_f$ matches the one attached to
  $A_g$, and in particular the minimum field of totally definition of
  both building blocks coincide, so the building block $E_g$ of $A_g$
  can be totally defined over the field
  $\Q(\sqrt{-d},\sqrt{-2})=K(\sqrt{-2})$ (see
  Remark~\ref{rem:ell-curve}). We need to prove that the elliptic
  curve $E_g$ can furthermore be defined over $K$.
		
  Let $\sigma_2 \in \Gal(\Q(\sqrt{-d},\sqrt{-2})/\Q)$ be the map given
  by $\sigma_2(\sqrt{-d})=-\sqrt{-d}$ and
  $\sigma_2(\sqrt{-2})=\sqrt{-2}$. Denote by $\identity$ the identity
  element in such a Galois group. Since the elliptic curve $\E$ is
  defined over $K$, we can take the isogeny $\mu_{\sigma_2}=1$ (the
  identity) and $\mu_{\identity} =1$ (i.e.\ the isogeny corresponding
  to the identity element to be the identity map on $\E$), the
  corresponding map $\widetilde{\mu_{\sigma_2}}$ on $E_g$ satisfies
  that
  \[
    1 = c({\sigma_2},{\sigma_2}) = \widetilde{\mu_{\sigma_2}} \circ
    \sigma_2(\widetilde{\mu_{\sigma_2}}).
  \]
  In particular, $\widetilde{\mu_{\sigma_2}}$ has degree one, hence is
  an isomorphism (so $E_g$ is isomorphic to $\sigma_2(E_g)$). Then the
  $j$-invariant of $E_g$ lies in $K$, so we can take the curve $E_g$
  defined over such a field.
\end{proof}
	
\begin{remark}
  \label{remark:building-block}
  Even when the building block $E_g$ of $A_g$ is defined over $K$, it
  is not true (in general) that if $L/K$ is a field extension where
  the abelian variety $A_g$ has a $1$-dimensional building block $E$,
  then $E$ is isogenous to $E_g$ (this will become clear while proving
  Theorem~\ref{thm:main-decomposition}). Here is an example (that will
  be explained in detail while proving such theorem): let
  $L=K \cdot \overline{\Q}^\varepsilon$, where
  $\overline{\Q}^\varepsilon$ denotes the field fixed by the kernel of
  $\varepsilon$. Then there exists an elliptic curve $E$ defined over
  $L$ (actually $E = \E \otimes \varkappa$) such that the abelian
  variety $A_f$ is isogenous over $L$ to $E^r$ (where
  $r = \dim(A_f)$). However, the building block $E$ is not defined
  over $K$ and is not isomorphic (nor isogenous) to $E_g$ over $L$
  (although they clearly are isomorphic over $\overline{\Q}$).
		
  A natural question (whose answer will be needed later) is the
  following:
		
  \vspace{2pt}
		
  \noindent {\bf Question 3:} Suppose that $L$ is a number field, and
  suppose that an abelian variety $A$ is isogenous over $L$ to $E^r$
  for some building block $E$. When is $E$ the base change of a
  variety (up to isogeny) that is defined over a smaller field $K$?
		
  \vspace{2pt}
		
  Note that if $E$ is defined over $K$, then the cocycle $c$ attached
  to $E$ in the proof of the last theorem is trivial while restricted
  to $\Gal_K$ (not just cohomologically trivial). In \cite[Theorem
  8.2]{MR2058653}, Ribet proves that the converse is also true, i.e.\
  he proves that if the cocycle $c$ is trivial on $\Gal_K$ then there
  exists an abelian variety $\tilde{E}$ defined over $K$ such that $E$
  is isogenous to $\tilde{E}$ over $L$.
\end{remark}
	
To relate the residual Galois representation of $E_g$ to that of our
original elliptic curve $\E$, we need some understanding on the
coefficient field $K_f$. Let $\ordkappa$ be the order of the character
$\varkappa$ and let $\zeta_\ordkappa$ be a primitive $\ordkappa$-th
root of unity. Let $\QQ(\varkappa)=\Q(\zeta_\ordkappa)$ denote the
field obtained by adding to $\QQ$ the values of $\varkappa$.
\begin{lemma}
  Following the previous notation we have that
  $\Q(\zeta_M) \subset K_f$.
  \label{lemma:coef-units}
\end{lemma}
	
\begin{proof}
  The set of prime ideals $\id{p}$ of $K$ which are unramified in
  $K/\Q$ and with inertial degree $1$ over $\Q$ (i.e.\
  $f(\id{p}|p)=1$) have density one in $K$, so by Chebotarev's density
  theorem, there exists a set $S$ of primes with inertial degree $1$
  of positive density (in the set of all primes of $K$) such that
  $\varkappa(\id{p})$ is a primitive $\ordkappa$-th root of unity for
  all prime ideals $\id{p}\in S$.  Our assumption that the curve $\E$
  does not have complex multiplication implies that for some prime
  $\id{p} \in S$, the value $a_{\id{p}}(\E) \neq 0$ (as the set of
  primes $\id{p}$ of good reduction where $a_{\id{p}}(\E)=0$ has
  density zero by \cite{MR1484415}, page IV-13). In particular, for
  such a prime (of norm $p$), it holds that
  \[
    a_p(f) = \varkappa(\id{p}) a_{\id{p}}(\E),
  \]
  is non-zero. The result follows from the fact that
  $\varkappa(\id{p})$ is a primitive $\ordkappa$-th root of unity.
\end{proof}
	
Let $\overline{K}^{\varkappa}$ denote the abelian extension of $K$
fixed by the kernel of the character
$\varkappa: \Gal_K \to \CC^\times$ and similarly let
$\overline{\Q}^\varepsilon$ be the field fixed by the kernel of the
character $\varepsilon$.
	
\begin{lemma}
  With the previous notation,
  $K\cdot\overline{\Q}^\varepsilon \subset \overline{K}^\varkappa$
  with index $2$. Moreover, we have the following field diagram.
  \[
    \xymatrix{
      & \overline{K}^{\varkappa}\ar@{-}[d]_2\\
      & K \cdot \overline{\Q}^\varepsilon\ar@{-}[dl] \ar@{-}[dr]\\
      \overline{\Q}^\varepsilon\ar@{-}[dr] &  & K\ar@{-}[dl]\\
      & \Q }
  \]
  \label{lemma:field-diagram}
\end{lemma}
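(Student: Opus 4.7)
The plan is to translate everything to the Galois side via class field theory and exploit the key identity $\varkappa^2 = \varepsilon \circ \norm$ recorded in the remark preceding this lemma. Because the Artin reciprocity map is compatible with the norm from $K$ down to $\Q$, the idèle-theoretic relation $\varkappa^2 = \varepsilon \circ \norm$ becomes the Galois-character identity $\varkappa^2 = \varepsilon|_{\Gal_K}$ on $\Gal_K$.

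For the inclusion $K \cdot \overline{\Q}^\varepsilon \subset \overline{K}^\varkappa$, I would take any $\sigma \in \ker(\varkappa) \subset \Gal_K$ and observe that $\varepsilon(\sigma) = \varkappa(\sigma)^2 = 1$, so $\sigma$ also fixes $\overline{\Q}^\varepsilon$ and hence the compositum $K \cdot \overline{\Q}^\varepsilon$. Passing to fixed fields gives the desired containment.

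For the index, restricting $\varkappa$ to $H := \Gal_{K \cdot \overline{\Q}^\varepsilon} = \Gal_K \cap \ker(\varepsilon)$ yields $\varkappa^2|_H = \varepsilon|_H = 1$, so the image $\varkappa(H)$ is contained in $\{\pm 1\}$ and $[\overline{K}^\varkappa : K \cdot \overline{\Q}^\varepsilon] \in \{1,2\}$. To pin the index down to $2$, I would invoke the explicit construction of $\varkappa$ from \cite{PT}: the Nebentypus $\varepsilon$ is quadratic while $\varkappa$ has order exactly $4$, so $\varkappa$ cannot factor through the at-most-quadratic quotient $\Gal(K \cdot \overline{\Q}^\varepsilon/K)$ and is therefore nontrivial on $H$. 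Finally, to justify that $K$ and $\overline{\Q}^\varepsilon$ sit as sister quadratic fields over $\Q$, I would verify $K \cap \overline{\Q}^\varepsilon = \Q$; this follows by inspection because the explicit $\varepsilon$ described in \cite{PT} is not the quadratic character attached to $K = \Q(\sqrt{-d})$.

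The main obstacle is the sharpness of the index bound: the inclusion and the triviality of $K \cap \overline{\Q}^\varepsilon$ are essentially formal once the class field theory translation is in place, but proving that the restriction $\varkappa|_H$ is nontrivial requires knowing the precise order of $\varkappa$, and for that one cannot avoid appealing to the explicit construction of $\varkappa$ from the $\Q$-curve $\E$ given in \cite{PT}.
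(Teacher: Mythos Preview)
Your approach is the paper's: both rest on the Galois-side identity $\varepsilon|_{\Gal_K}=\varkappa^2$, which you correctly obtain from the id\`ele relation $\varkappa^2=\varepsilon\circ\norm$ via the norm-compatibility of Artin reciprocity. The paper's proof is literally that one sentence, so what you wrote is a fleshed-out version of the same argument.

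One caution on the extra details. You assert that $\varepsilon$ is quadratic and $\varkappa$ has order exactly $4$, and you read the diagram as placing $\overline{\Q}^\varepsilon$ as a quadratic ``sister'' of $K$. The present paper does not commit to this: it only records that both orders are powers of two, and it keeps $M=\ord(\varkappa)$ as a free parameter throughout (Lemma~\ref{lemma:coef-units}, Proposition~\ref{prop:coef-field}, Theorem~\ref{thm:inner-twists}); the diagram labels only the top edge. Fortunately your index argument does not need those specific values. Since $\overline{K}^\varkappa$ and $K\cdot\overline{\Q}^\varepsilon$ are the fixed fields in $\Gal_K$ of $\ker\varkappa$ and $\ker\varkappa^2$ respectively, the index is $[\ker\varkappa^2:\ker\varkappa]$, and this equals $2$ as soon as $\varkappa$ has even order---equivalently, as soon as $\varkappa$ is nontrivial, given that its order is a $2$-power. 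For $K\cap\overline{\Q}^\varepsilon=\Q$, the paper's own justification (in the proof of Lemma~\ref{lemma-abelian-ext}) is that $\varepsilon$ is even, hence $\overline{\Q}^\varepsilon$ is totally real and cannot contain the imaginary quadratic field $K$; this is cleaner and does not rely on $\varepsilon$ being quadratic.
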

\begin{proof}
  Follows from the fact that as a Galois character,
  $\varepsilon|_{\Gal_K} = \varkappa^2$.
\end{proof}

\begin{prop}
  The coefficient field $K_f$ is either:
  \begin{enumerate}
  \item The field $\Q(\zeta_\ordkappa)$, or
			
  \item A quadratic extension of $\Q(\zeta_\ordkappa)$.
  \end{enumerate}
  \label{prop:coef-field}
  Moreover, $K_f = \Q(\zeta_\ordkappa,a_p(f))$, where $p$ is any prime
  inert in $K/\Q$ that does not divide the level of $f$, of ordinary
  reduction for $\E$ and such that $a_p(f)\neq0$.
\end{prop}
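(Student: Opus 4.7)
My plan is to exploit the inner twist machinery. By the remark preceding the proposition, the center of $\End^0(A_f)$ is $\Q$, so Ribet's theorem forces $\Gamma_f=\Aut_\Q(K_f)$; in particular $K_f/\Q$ is abelian and every $\gamma\in\Gal(K_f/\Q)$ is attached to a Dirichlet character $\chi_\gamma$ satisfying $\gamma(a_q(f))=\chi_\gamma(q)\,a_q(f)$ for almost all primes $q$. By Lemma~\ref{lemma:coef-units} we already know $\Q(\zeta_M)\subset K_f$, so the task reduces to showing $[K_f:\Q(\zeta_M)]\le 2$ and, when this is strict, identifying which $a_p(f)$ generate the extension.

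The key computation uses the identification $\rho_{f,\id{p}}|_{\Gal_K}\simeq \rho_{\E,p}\otimes\varkappa$. If $q=\id{q}\,\overline{\id{q}}$ splits in $K$ and $q$ does not divide the level, then $\id{q}$ has residue degree one, so
\[
a_q(f)=\varkappa(\id{q})\,a_{\id{q}}(\E)\in\Q(\zeta_M),
\]
because $a_{\id{q}}(\E)\in\Z$ and $\varkappa(\id{q})\in\mu_M$. If $q$ is inert in $K$, applying $\Frob_q^{\,2}=\Frob_{\id{q}}$ to the same identification yields
\[
a_q(f)^{2} = 2\,\varepsilon(q)\,q + \varkappa(\id{q})\,a_{\id{q}}(\E)\in\Q(\zeta_M),
\]
where $\varepsilon(q)\in\Q(\zeta_M)$ because $\varkappa^{2}=\varepsilon\circ N_{K/\Q}$ forces the order of $\varepsilon$ to divide $M$. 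Consequently $K_f$ is generated over $\Q(\zeta_M)$ by elements whose squares lie in $\Q(\zeta_M)$.

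Now pick $\gamma\in\Gal(K_f/\Q(\zeta_M))$ and its character $\chi_\gamma$. Applying $\gamma$ to the split-prime formula gives $\chi_\gamma(q)=1$ for every split $q$ with $a_q(f)\neq 0$, and since $f$ has no complex multiplication the set of split primes with $a_q(f)=0$ has density zero. Thus $\chi_\gamma$ vanishes on a set of primes of density $1/2$, which forces the order of $\chi_\gamma$ to divide $2$; and the only non-trivial quadratic Dirichlet character trivial on all split primes of $K/\Q$ is the Kronecker character $\chi_K$. Ruling out $\chi_\gamma=1$ (which would give $\gamma(f)=f$ and hence $\gamma=1$), I conclude $[K_f:\Q(\zeta_M)]\le 2$, with the non-trivial automorphism $\tau$ (when it exists) satisfying $\chi_\tau=\chi_K$. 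If $K_f$ is a strict quadratic extension of $\Q(\zeta_M)$ and $p$ is inert in $K$ with $a_p(f)\neq 0$, then $\tau(a_p(f))=\chi_K(p)\,a_p(f)=-a_p(f)\neq a_p(f)$, so $a_p(f)\notin\Q(\zeta_M)$ and $\Q(\zeta_M,a_p(f))=K_f$; when $K_f=\Q(\zeta_M)$ the ``moreover'' assertion is trivial.

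The main obstacle is the character-theoretic step: ensuring that the behaviour of $\chi_\gamma$ on split primes alone pins down $\chi_\gamma\in\{1,\chi_K\}$. This uses crucially the non-CM hypothesis (to keep the exceptional set of primes with $a_q(f)=0$ of density zero) together with a standard Chebotarev/orthogonality argument showing that distinct quadratic Dirichlet characters differ on a positive-density set of split primes of $K/\Q$.
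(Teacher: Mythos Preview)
Your proof is correct and takes a genuinely different route from the paper's. After deriving the same split/inert formulas for $a_q(f)$, the paper does \emph{not} invoke $\Gamma_f$ at all: it fixes one inert prime $p$ with $a_p(f)\neq 0$, sets $L=\Q(\zeta_M,a_p(f))$, and argues representation-theoretically. Namely, it conjugates $\rho_{\E,\ell}\otimes\varkappa$ so that the image of $\Frob_p^2$ is an explicit matrix in $\GL_2(L_\lambda)$, and then shows by an elementary $2\times 2$ matrix identity (two matrices with equal squares and equal nonzero traces coincide) that the extension $\tilde\rho(\Frob_p)$ can also be taken in $\GL_2(L_\lambda)$. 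Hence $\tilde\rho$ is realised over $L_\lambda$, so every $a_q(f)$ lies in $L$, giving $K_f\subset L$. Your approach instead uses the structural input $\End^0(A_f)\simeq M_r(\Q)$ (already established) to get $K_f^{\Gamma_f}=\Q$ and hence $\Gamma_f=\Gal(K_f/\Q)$, then pins down the possible $\chi_\gamma$ for $\gamma\in\Gal(K_f/\Q(\zeta_M))$ by a density argument. The paper's argument is more self-contained (it needs only the coefficient formulas and linear algebra, not Ribet's identification of the centre with $K_f^{\Gamma_f}$), while yours is more conceptual and has the pleasant side effect of essentially pre-computing the inner twist $(\tau,\delta_K)$ that reappears in Theorem~\ref{thm:inner-twists}, so it integrates more smoothly with what follows. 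One small point worth tightening: your claim that $\varepsilon(q)\in\Q(\zeta_M)$ is most cleanly justified by noting that on degree-one primes $\varepsilon(q)=\varkappa(\id q)^2\in\mu_M$, and these determine $\varepsilon$.
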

\begin{proof}
  By Lemma~\ref{lemma:coef-units} we have
  $\Q(\zeta_\ordkappa) \subseteq K_f$. Let $p$ be a rational prime not
  dividing the level of $f$ which is split in $K$, say
  $p = \id{p}\overline{\id{p}}$. Then
  \begin{equation}
    a_p(f) = a_{\id{p}}(\E) \varkappa(\id{p}) \in \Q(\zeta_\ordkappa).
  \end{equation}
  On the other hand, if $p$ is an inert prime, we have the formula
  \begin{equation}
    \label{eq:inert}
    a_p(f)^2 = a_{p}(\E)\varkappa(p) +2p\varepsilon(p).
  \end{equation}
  Recall that $\varkappa^2 = \varepsilon \circ \norm$, so
  $\varkappa(p) = \pm \varepsilon(p)$. Thus
  $a_p(f)^2 \in \Q(\zeta_M)$. Formula~(\ref{eq:inert}) also implies
  that for a fixed inert prime $p$, the extension
  $L=\Q(\zeta_\ordkappa)(a_p(f))$ has degree at most two over
  $\Q(\zeta_\ordkappa)$, and clearly $L \subset K_f$.
		
  Let $\ell$ be a rational prime, and let $\lambda$ be a prime in $L$
  dividing it and let $\id{l}= \lambda \cap \Q(\zeta_\ordkappa)$. In
  the usual basis, the twisted representation
  $\rho_{\E,\ell} \otimes \varkappa$ takes values in
  $\GL_2(\Q(\zeta_\ordkappa)_{\id{l}})$. To extend it to a
  representation $\tilde{\rho}$ of $\Gal_\Q$ it is enough to define it
  on an element $\sigma \in \Gal_\Q$ which is not in $\Gal_K$, for
  example a Frobenius element $\Frob_p$ at a prime $p$ inert in $K$.
		
  To ease notation, let $t = a_p(f) = \trace(\tilde{\rho}(\Frob_p))$
  and $s = p \varepsilon(p) = \det(\tilde{\rho}(\Frob_p))$.  Assume
  that $a_p(f)=t \neq 0$ and that $p$ is a prime of ordinary reduction
  for $\E$. The ordinary hypothesis on $p$ implies that
  $t \neq 2\sqrt{s}$ (otherwise (\ref{eq:inert}) gives that
  $a_p(f) = \pm 2p$ so $p$ is not ordinary for $\E$). The matrices
  $\rho_{\E,\ell}(\Frob_p^2)\varkappa(\Frob_p^2)$ and
  $\left(\begin{smallmatrix}-s & -st \\ t &
      t^2-s\end{smallmatrix}\right)$ are diagonalizable, have the same
  trace and the same determinant, hence there exists a matrix
  $W \in \GL_2(L_\lambda)$ such that
  \[
    W (\rho_{\E,\ell}(\Frob_p^2)\varkappa(\Frob_p^2)) W^{-1} =
    \left(\begin{smallmatrix}-s & -st \\ t &
        t^2-s\end{smallmatrix}\right).
  \]
  Conjugating the representation $\rho_{\E,\ell} \otimes \varkappa$ by
  $W$, we can assume, without loss of generality, that this twisted
  representation takes values in $\GL_2(L_\lambda)$ and that
  \begin{equation}
    \rho_{\E,\ell}(\Frob_p^2)\varkappa(\Frob_p^2) =
    \begin{pmatrix}
      -s & -st \\
      t & t^2-s\end{pmatrix}.
  \end{equation}
  We claim that then (since $t = a_p(f) \neq 0$)
  \[
    \tilde{\rho}(\Frob_p) =
    \begin{pmatrix} 0 & -s\\ 1 & t
    \end{pmatrix}.
  \]
  The reason is that if $A$ is any $2 \times 2$ matrix with different
  eigenvalues, and $B$ is another $2 \times 2$ matrix satisfying that
  \[
    A^2 = B^2, \quad \trace(A) = \trace(B) \neq 0,
  \]
  then $A = B$ (which follows from an elementary computation, assuming
  that $A$ is diagonal). This implies that the representation
  $\tilde{\rho}$ can be chosen to take values in
  $\GL_2(L_\lambda)$. In particular, for any prime $q$ not dividing
  the level of $f$,
  $\trace(\tilde{\rho}(\Frob_q)) = a_q(f) \in L_\lambda$ for all
  primes $\lambda \in L$, hence $a_q(f) \in L$ (by
  Lemma~\ref{lemma:stupid}) and the same is true for primes dividing
  the level of $f$ (by Chebotarev density theorem). Since $K_f$ is the
  smallest field containing $a_q(f)$, we conclude that
  $K_f \subset L$.
\end{proof}
	
\begin{remark}
  The first case of the last result can occur. For example, let $E$ be
  a rational elliptic curve attached to a rational modular form $f$,
  and let $\varkappa$ be any quadratic character of $K$ which does not
  come from $\Q$. Let $\tilde{E}:= E \otimes \varkappa$ be the twist
  of $E$ by $\varkappa$. Then the coefficient field of $\tilde{E}$
  equals $K_f = \Q$, which is the trivial extension of
  $\Q(\zeta_2) = \Q$.
\end{remark}
	
\begin{lemma}
  \label{lemma:stupid}
  Let $\alpha \in \overline{\Q}$ and let $L$ be a number
  field. Suppose that for all prime ideals $\id{p}$ of $L$,
  $\alpha \in L_{\id{p}}$. Then $\alpha \in L$.
\end{lemma}
	
\begin{proof}
  Suppose that $\alpha \not \in L$, so $L(\alpha)/L$ is a non trivial
  extension. Let $\id{p}$ be a prime ideal of $L$ and $\id{q}$ be a
  prime ideal of $L(\alpha)$ dividing $\id{p}$ satisfying that the
  inertial degree $f(\id{q}\mid\id{p})$ is not $1$ (such an ideal
  always exists by applying Chebotarev's density theorem to the Galois
  closure of $L(\alpha)$ over $L$). Then $\alpha \not \in L_{\id{p}}$,
  contradicting the hypothesis.
\end{proof}
	
An important fact of the character $\varkappa$ (and also of
$\varepsilon$) is that by construction it has order a power of two
(although this is not explicitly stated in \cite{PT}, it follows from
its construction given in the proof of Theorem 3.2 in loc.\ cit).
	
\begin{lemma}
  \label{lemma-abelian-ext}
  Suppose that $\Q(\zeta_\ordkappa) \subsetneq K_f$. Then the
  extension $K_f/\Q$ is an abelian Galois extension. Furthermore, the
  field $K_f$ is the compositum of a quadratic extension of $\Q$ with
  $\Q(\zeta_\ordkappa)$.
\end{lemma}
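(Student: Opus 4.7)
The plan has two parts. For abelianness of $K_f/\Q$, I will invoke \cite[Theorem~5.1]{ribet1980twists}, which identifies the fixed field of the natural action of $\Gamma_f$ on $K_f$ with the center of $\End^0(A_f)$. In our setting this center is $\Q$, as verified in the remark following $\End^0(A_f)\simeq M_r(\Q)$, so $K_f/\Q$ is Galois with Galois group $\Gamma_f$, which is abelian by \cite[Proposition~3.3]{ribet1980twists}.

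For the compositum claim I will produce an explicit $\delta\in\Z$ with $K_f = \Q(\zeta_M)(\sqrt\delta)$. Pick a rational prime $p$ inert in $K/\Q$, not dividing the level, with $a_p(f)\neq 0$; such a prime exists by the Chebotarev argument of Lemma~\ref{lemma:coef-units}, and by Proposition~\ref{prop:coef-field} one has $K_f = \Q(\zeta_M)(a_p(f))$. Using $\varkappa^2 = \varepsilon\circ\norm$ and $\norm(p\Om_K) = p^2$ gives $\varkappa(p) = \eta\,\varepsilon(p)$ for some $\eta\in\{\pm 1\}$, and substituting into the identity $a_p(f)^2 = a_p(\E)\varkappa(p)+2p\,\varepsilon(p)$ derived in the earlier remark yields
\[
a_p(f)^2 = \varepsilon(p)\bigl(\eta\, a_p(\E) + 2p\bigr).
\]

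The key subsidiary step is to show $\sqrt{\varepsilon(p)}\in\Q(\zeta_M)$, which reduces to checking that $\varepsilon$ has order $M/2$. The hypothesis $\Q(\zeta_M)\subsetneq K_f$ forces $M\geq 4$ by Proposition~\ref{prop:coef-field}, so $\varepsilon|_{\Gal_K}=\varkappa^2$ is nontrivial; hence $K\not\subset\overline{\Q}^{\varepsilon}$ and $K\cap\overline{\Q}^{\varepsilon}=\Q$. Combining this with the relation $[K\cdot\overline{\Q}^{\varepsilon}:\Q]=M$ extracted from Lemma~\ref{lemma:field-diagram} forces $[\overline{\Q}^{\varepsilon}:\Q]=M/2$. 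Since $M$ is a power of $2$, $\mu_{M/2}$ is exactly the group of squares inside $\mu_M$, so $\sqrt{\varepsilon(p)}\in\Q(\zeta_M)$. Setting $\delta\vcentcolon=\eta\, a_p(\E)+2p\in\Z$ gives $a_p(f)=\sqrt{\varepsilon(p)}\sqrt{\delta}$, and therefore
\[
K_f = \Q(\zeta_M)\bigl(\sqrt{\delta}\bigr) = \Q(\zeta_M)\cdot\Q\bigl(\sqrt{\delta}\bigr).
\]

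The main anticipated obstacle is the precise identification of the order of $\varepsilon$: one has to leverage the $2$-power order of $\varkappa$ together with the field diagram of Lemma~\ref{lemma:field-diagram} to pin down that $\varepsilon$ has order exactly $M/2$ (rather than possibly $M$). This is precisely what allows us to extract $\sqrt{\varepsilon(p)}$ inside $\Q(\zeta_M)$ and thereby reduce $K_f$ to an honest rational quadratic compositum.
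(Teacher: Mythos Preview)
Your strategy for the compositum claim is the paper's: for an inert prime $p$ with $a_p(f)\neq 0$ write $a_p(f)^2=\varepsilon(p)\bigl(\pm a_p(\E)+2p\bigr)$, show that $\varepsilon(p)$ is already a square in $\Q(\zeta_M)$, and conclude $K_f=\Q(\zeta_M)(\sqrt\delta)$ with $\delta\in\Z$. Your separate abelianness argument via Ribet's identification of the center of $\End^0(A_f)$ with $K_f^{\Gamma_f}$ is correct and somewhat more conceptual than the paper's route (the paper simply reads abelianness off from the explicit description $K_f=\Q(\zeta_M,\sqrt\delta)$), though of course it becomes redundant once the compositum description is established.

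There is, however, a genuine gap in your ``key subsidiary step''. From $\varepsilon|_{\Gal_K}=\varkappa^2$ being nontrivial you infer $K\not\subset\overline\Q^{\varepsilon}$, but the implication runs the wrong way: nontriviality of $\varepsilon|_{\Gal_K}$ says $\Gal_K\not\subset\ker\varepsilon$, i.e.\ $\overline\Q^{\varepsilon}\not\subset K$, which is not the containment you need. (Abstractly, if $\delta_K$ were a power of $\varepsilon$ one would have $K\subset\overline\Q^{\varepsilon}$ while $\varepsilon|_{\Gal_K}$ remains nontrivial.) Relatedly, your assertion that $\Q(\zeta_M)\subsetneq K_f$ forces $M\geq 4$ is not supported by Proposition~\ref{prop:coef-field}; that proposition only tells you the extension $K_f/\Q(\zeta_M)$ is quadratic and says nothing excluding $M=2$.

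The paper closes this gap with an input you omit: by its construction in \cite{PT} the Nebentypus $\varepsilon$ is \emph{even}, so $\overline\Q^{\varepsilon}$ is totally real and therefore automatically disjoint from the imaginary quadratic field $K$. With $K\cap\overline\Q^{\varepsilon}=\Q$ established this way, the field diagram of Lemma~\ref{lemma:field-diagram} gives $[\overline\Q^{\varepsilon}:\Q]=M/2$ (uniformly, including the case $M=2$), and since $M$ is a $2$-power every value of $\varepsilon$ lies in $\mu_{M/2}=(\mu_M)^2$, which is exactly the square-root extraction you wanted.
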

\begin{proof}
  As proved in the last proposition, the quadratic extension
  $K_f/\Q(\zeta_\ordkappa)$ is obtained by adding the coefficient
  $a_p(f)$ for $p$ a prime that is inert in $K$ and of ordinary
  reduction for $\E$ satisfying that $a_p(f) \neq 0$. Recall that if
  $p$ is an inert prime, then $\varkappa^2(p) = \varepsilon(p^2)$, so
  $\varkappa(p) = \pm \varepsilon(p)$. Replacing this equality in
  (\ref{eq:inert}), we get that
  \begin{equation}
    \label{eq:inert-coef}
    a_p(f)^2 = \varepsilon(p) (\pm a_p(\E) + 2p).    
  \end{equation}
  Keeping the previous notation, let $\ordkappa$ be the order of
  $\varkappa$ (a power of $2$). The order of $\varepsilon$ equals the
  degree of the extension $[\overline{\Q}^{\varepsilon}:\Q]$. Since
  the character $\varepsilon$ is even (as proven in \cite[Section
  \textsection 3.1]{PT}), its fixed field is a totally real number
  field, so $\overline{\Q}^{\varepsilon} \cap K = \Q$. In particular,
  $[\overline{\Q}^{\varepsilon}:\Q] =[K\cdot
  \overline{\Q}^{\varepsilon}:K]=\frac{\ordkappa}{2}$ by
  Lemma~\ref{lemma:field-diagram}. Then $\varepsilon(p)$ is a root of
  unity of order a divisor of $\frac{\ordkappa}{2}$, hence a square in
  $\Q(\zeta_\ordkappa)$, so
  $K_f = \Q(\zeta_\ordkappa)[\sqrt{(\pm a_p(\E)+ 2p}]$ as claimed.
\end{proof}
	
The last lemma implies that if $\Q(\zeta_\ordkappa) \subsetneq K_f$,
then the Galois group $\Gal(K_f/\Q)$ is isomorphic to
$\Z/2 \times (\Z/\ordkappa)^\times$. It turns out that (in our
situation) all elements of such Galois group correspond a inner
twists.
	
\begin{thm}
  \label{thm:inner-twists}
  Let $\ordkappa$ be the order of the character $\varkappa$ and let
  $\delta_K$ denote the quadratic Dirichlet character corresponding to
  the extension $K/\Q$. Write $K_f = \Q(\zeta_\ordkappa)\cdot F$,
  where $F/\Q$ is at most a quadratic extension, as in the previous
  lemma. Let $j \in (\Z/\ordkappa)^\times$ and
  $\sigma_j \in \Gal(\Q(\zeta_M)/\Q)$ be the map given by
  $\sigma_j(\zeta_\ordkappa)=\zeta_\ordkappa^j$.  Then all inner
  twists of $A_f$ are the following:
  \begin{itemize}
  \item If $\sigma_j$ acts trivially on $F$, then
    $(\sigma_j,\varepsilon^{(j-1)/2})$ is an inner twist.
			
  \item If $\sigma_j$ does not act trivially on $F$, then
    $(\sigma_j,\delta_K\varepsilon^{(j-1)/2})$ is an inner twist.
  \end{itemize}
\end{thm}
\begin{proof}
  Let $p$ be a rational prime not dividing the level of $f$. If $p$
  splits in $K/\Q$, let $\id{p}$ a prime ideal of $K$ dividing
  $p$. Then $a_p(f) = \varkappa(\id{p}) a_{\id{p}}(\E)$, so
  \begin{equation}
    \label{eq:in1}
    \sigma_j(a_p(f)) = \varkappa^j(\id{p}) a_{\id{p}}(\E) = \varkappa^{j-1}(\id{p})(\varkappa(\id{p})a_{\id{p}}(\E)) = \varepsilon^{(j-1)/2}(p)a_p(f),
  \end{equation}
  where the last equality comes from the fact that
  $\varkappa^{(j-1)}(\id{p}) = (\varkappa^2(\id{p}))^{(j-1)/2}$,
  because $j$ is odd (recall that $\ordkappa$ is a power of $2$).  On
  the other hand, if $p$ is inert in $K$, it is enough to study the
  case when $a_p(f) \neq 0$. By~(\ref{eq:inert-coef}) we have
  \[
    a_p(f)^2 = \varepsilon(p)(\pm a_p(\E) + 2p).
  \]
  To ease notation, let $\eta =\pm a_p(\E) + 2p$. Note that
  $\varepsilon(p) = \zeta_\ordkappa^{2r}$ for some $r$ (see the proof
  of the last lemma), so $a_p(f) = \zeta_\ordkappa^r\sqrt{\eta}$ (for
  the right choice of the square root). Applying $\sigma_j$ to it we
  get
  \begin{equation}
    \label{eq:inert-case}
    \sigma_j(a_p(f)) = \zeta_\ordkappa^{jr}\sigma_j(\sqrt{\eta}) =
    a_p(f) \zeta_\ordkappa^{(j-1)r} \frac{\sigma_j(\sqrt{\eta})}{\sqrt{\eta}} =  a_p(f) \varepsilon(p)^{(j-1)/2}\frac{\sigma_j(\sqrt{\eta})}{\sqrt{\eta}}.
  \end{equation}
		
  If $\sigma_j(\sqrt{\eta}) = \sqrt{\eta}$ (i.e.\ $\sigma_j$ acts
  trivially on $F$), then equations~(\ref{eq:in1}) and
  (\ref{eq:inert-case}) imply that $(\sigma_j,\varepsilon^{(j-1)/2})$
  is an inner twist, while if $\sigma_j(\sqrt{\eta}) = -\sqrt{\eta}$
  (i.e.\ $\sigma_j$ does not act trivially on $F$), then both
  equations imply that $(\sigma_j,\delta_K \varepsilon^{(j-1)/2})$ is
  an inner twist.
\end{proof}
	
As an immediate application, using Lemma~\ref{lemma:fielddefinition},
we get the following result.
	
\begin{coro}
  All the endomorphisms of $A_f$ are defined over the field
  $K \cdot \overline{\Q}^{\varepsilon}$.
  \label{coro:endo-definition}
\end{coro}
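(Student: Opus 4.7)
The plan is to combine the explicit classification of inner twists just obtained in Theorem~\ref{thm:inner-twists} with Ribet's description of $\End^0(A_f)$ and with Lemma~\ref{lemma:fielddefinition}. Recall that, by \cite[Theorem~5.1]{ribet1980twists} (the same result invoked when introducing the $\eta_\gamma$), the $\overline{\Q}$-algebra $\End^0(A_f)$ is generated as a $\Q$-algebra by two pieces: the image of $K_f$ acting through Hecke operators, which is already defined over $\Q$, and the family of endomorphisms $\eta_\gamma$ indexed by the inner twists $(\gamma,\chi)\in \Gamma_f$. Consequently, it suffices to prove that every $\eta_\gamma$ is defined over $K\cdot \overline{\Q}^\varepsilon$.

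For this step I would appeal to Lemma~\ref{lemma:fielddefinition}, which says that $\eta_\gamma$ is defined over the fixed field $\overline{\Q}^\chi$ of the kernel of its twisting character $\chi$. Theorem~\ref{thm:inner-twists} shows that every such $\chi$ is either a power of $\varepsilon$, namely $\varepsilon^{(j-1)/2}$, or the product $\delta_K\varepsilon^{(j-1)/2}$. In the first case $\chi$ factors through $\Gal(\overline{\Q}^\varepsilon/\Q)$, so $\overline{\Q}^\chi\subseteq \overline{\Q}^\varepsilon$; in the second case $\chi$ factors through $\Gal(K\cdot \overline{\Q}^\varepsilon/\Q)$, because both $\delta_K$ and $\varepsilon$ do, so $\overline{\Q}^\chi\subseteq K\cdot \overline{\Q}^\varepsilon$. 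Either way $\eta_\gamma$ is defined over $K\cdot \overline{\Q}^\varepsilon$, and combining this with the $\Q$-rational Hecke part yields the corollary.

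I do not anticipate a serious obstacle here: once Theorem~\ref{thm:inner-twists} has pinned down the twisting characters of every element of $\Gamma_f$, the statement reduces to the elementary Galois-theoretic observation that the fixed fields of those characters all sit inside $K\cdot \overline{\Q}^\varepsilon$. The only point worth being careful about is to make sure the Hecke contribution is not forgotten, but that contribution is defined over $\Q$ and therefore a fortiori over $K\cdot \overline{\Q}^\varepsilon$, so it causes no trouble.
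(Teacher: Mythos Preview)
Your argument is correct and is exactly the one the paper has in mind: the corollary is stated there as an immediate application of Lemma~\ref{lemma:fielddefinition} together with the list of inner twists in Theorem~\ref{thm:inner-twists}. Your write-up simply spells out the two cases for the twisting character and the (automatic) rationality of the Hecke part, which is precisely how the deduction goes.
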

	
Let $M_g$ be the constant coming from Theorem~\ref{thm:main}.
	
\begin{thm}
  \label{thm:main-decomposition}
  In the previous notation, and under the assumption that $K_f = K_g$
  and that $p > M_g$, there exists a building block $E_g$ defined over
  the quadratic field $K$ such that
  \[
    \bar{\rho}{_{\E,p}} \simeq \bar{\rho}{_{E_g,p}}.
  \]
\end{thm}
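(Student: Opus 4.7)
The plan is to begin with the $K$-rational building block $E_g^{(0)}$ of $A_g$ supplied by Theorem~\ref{thm:fielddefinition}, and then twist it by a suitable quadratic character of $\Gal_K$ to obtain the desired $E_g$. Analogously to the situation for $f$, the $\Q$-curve structure on $E_g^{(0)}$ produces a finite-order Hecke character $\varkappa_g$ of $K$ such that, after a suitable scalar extension, $\rho_{g,\mathfrak{p}}|_{\Gal_K}\simeq \rho_{E_g^{(0)},p}\otimes \varkappa_g$, while by construction $\rho_{f,\mathfrak{p}}|_{\Gal_K}\simeq \rho_{\E,p}\otimes \varkappa$. Restricting the congruence $\overline{\rho_{f,\mathfrak p}}\simeq \overline{\rho_{g,\mathfrak p}}$ to $\Gal_K$ then yields
\[
\overline{\rho_{\E,p}}\otimes \overline{\varkappa}\;\simeq\;\overline{\rho_{E_g^{(0)},p}}\otimes \overline{\varkappa_g}.
\]

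Since $f$ and $g$ share the same Nebentypus $\varepsilon$ (by the level-lowering step that produced $g\in S_2(\tilde{N},\varepsilon)$), both characters satisfy $\varkappa^{2}=\varkappa_g^{2}=\varepsilon\circ\norm$, so $\chi:=\varkappa_g\varkappa^{-1}$ has order dividing $2$. The displayed isomorphism then simplifies to $\overline{\rho_{\E,p}}\simeq \overline{\rho_{E_g^{(0)},p}}\otimes \overline{\chi}$. I then define $E_g$ to be the twist of $E_g^{(0)}$ by $\chi$, which is an elliptic curve over $K$ satisfying $\overline{\rho_{E_g,p}}\simeq \overline{\rho_{\E,p}}$ tautologically.

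The step I expect to be the main obstacle is verifying that $E_g$ is still a $\Q$-curve (hence a building block of $A_g$ over $K$), which amounts to showing that the twisting character $\chi$ is stable under $\Gal(K/\Q)$. This should follow from the matching of inner-twist data for $A_f$ and $A_g$ as $\Gal_\Q$-modules (Theorem~\ref{thm:main}), together with the freedom to replace $\varkappa_g$ by $\varkappa_g\cdot \eta|_{\Gal_K}$ for any character $\eta$ of $\Gal_\Q$ (this freedom corresponds to twisting the $\Q$-curve structure of $E_g^{(0)}$ by $\eta$). Using this flexibility one normalises $\varkappa_g$ so that $\varkappa$ and $\varkappa_g$ transform identically under complex conjugation, which forces the ratio $\chi$ to be automatically $\Gal(K/\Q)$-stable, and hence $E_g^{(0)}\otimes \chi$ to be a $\Q$-curve defined over $K$.
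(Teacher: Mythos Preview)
Your outline shares the paper's endgame---start from the $K$-rational building block of Theorem~\ref{thm:fielddefinition} and finish by a quadratic twist---but there is a real gap at your second step. You assert that the $\Q$-curve structure on $E_g^{(0)}$ produces $\varkappa_g$ with $\rho_{g,\mathfrak p}|_{\Gal_K}\simeq\rho_{E_g^{(0)},p}\otimes\varkappa_g$, ``analogously to the situation for $f$''. The analogy fails: for $f$ this holds \emph{by construction}, since $f$ was defined as the newform attached to the chosen extension of $\rho_{\E,p}\otimes\varkappa$. For $g$ the logic runs the other way---$g$ is given first and $E_g^{(0)}$ is extracted from $A_g$ afterwards. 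The $\Q$-curve machinery applied to $E_g^{(0)}$ does yield a character $\varkappa'$ such that $\rho_{E_g^{(0)},p}\otimes\varkappa'$ extends to $\Gal_\Q$, but that extension is attached to \emph{some} newform $h$ at \emph{some} prime $\mathfrak q$ of $K_h$; nothing you have written forces $(h,\mathfrak q)=(g,\mathfrak p)$. Closing this requires knowing that two $\GL_2$-type varieties over $\Q$ with a common building block differ only by a Dirichlet twist, and then matching the inner-twist data to land on the correct prime---none of which you supply. (Your closing worry about whether $E_g^{(0)}\otimes\chi$ remains a $\Q$-curve, by contrast, is not a genuine obstacle: a quadratic twist of a building block over $K$ is again a building block over $K$, which is all the statement requires.)

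The paper avoids this issue by arguing geometrically with $A_g$ itself rather than via an auxiliary newform. From Theorem~\ref{thm:main} and Corollary~\ref{coro:endo-definition} it exhibits a concrete abelian extension $\overline{K}^\varkappa/K$ over which $A_g$ is isogenous to $(E_g)^r$, so that $\overline{\rho_{\E,p}}$ and $\overline{\rho_{E_g,p}}$ already agree on $\Gal_{\overline{K}^\varkappa}$. Inducing back up to $\Gal_K$ (both curves being defined over $K$) shows that over $\Gal_K$ they differ by some power $\varkappa^i$, and a determinant comparison forces $\varkappa^i$ to be at worst quadratic---playing exactly the role your $\chi$ was meant to play, but obtained without any unjustified identification of newforms.
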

	
\begin{proof}
  Since all endomorphisms of $A_f$ are defined over
  $L:=K \cdot \overline{\Q}^\varepsilon$ (by
  Corollary~\ref{coro:endo-definition}), Theorem~\ref{thm:main}
  jointly with Proposition \ref{prop:decomposition} implies that
  $\End^0_L( A_g) \simeq M_r(\Q)$, where $r = \dim(A_f) =
  \dim(A_g)$. Then, over $L$ both varieties are isogenous to
  $r$-copies of an elliptic curve. Let us explain in more detail the
  situation for $A_f$. The extension $L/K$ is an abelian extension of
  order $M/2$, a power of $2$.  Over $\overline{\Q}$, $\E$ is a
  building block of $A_f$, but it is not a factor of its splitting over $L$ as
  we now explain. By Eichler-Shimura construction,
  \[
    \rho_{A_f,p}\simeq\bigoplus_{\sigma\in\Gal(K_f/\Q)}
    \rho_{\sigma(f),\id{p}},
  \]
  hence a similar decomposition holds while restricted to
  $\Gal_K$. For $\sigma$ the identity,
  $\rho_{f,\id{p}}|_{\Gal_K} \simeq \rho_{\E} \otimes \varkappa$,
  hence for any $\sigma \in \Gal(K_f/\Q)$ we have that
  \[
    \rho_{\sigma(f),\id{p}} \simeq \rho_{\E,\id{p}} \otimes
    \sigma(\varkappa).
  \]
  Recall that $\varkappa$ has order $\ordkappa$ and
  $\zeta_\ordkappa \in K_f$, hence while $\sigma$ ranges over all
  elements of $\Gal(K_f/\Q)$, $\sigma(\varkappa)$ ranges over all
  conjugates of $\varkappa$, which equals all its odd powers.
		
  Let $M'$ be the order of $\Gal(K_f/\Q)$.  By
  Proposition~\ref{prop:coef-field}, $M'$ equals $M$ or $M/2$,
  depending on whether $K_f$ is a quadratic extension of $\Q(\zeta_M)$
  or not. Then we get the following decomposition
  \begin{equation}
    \label{eq:decomposition-Af}
q			\rho_{A_f,p}|_{\Gal_K} \simeq  \left(\bigoplus_{i=1}^{M/2}  \rho_{\E,p}\otimes \varkappa^{2i-1}\right)^{2M'/M}.
\end{equation}  
The key point is that $\varkappa$ restricted to $\Gal_L$ is a
quadratic character (by Lemma~\ref{lemma:field-diagram}), so while
restricted to $\Gal_L$, the representation is isomorphic to $r$-copies
of $\rho_{\E,p}\otimes \varkappa$. In particular, the variety $A_f$
over $L$ is isogenous to $(\E \otimes \varkappa)^r$. The problem is
that the elliptic curve $\E \otimes \varkappa$ is not defined over
$K$. For that purpose, we look at the variety $A_f$ over
$\overline{K}^\varkappa$, and it is true that over such a field, $A_f$
is isogenous to $(\E)^r$ (the base change of a curve defined over
$K$). In particular, the cocycle $c$ attached to $\E$ (in the proof of
Theorem~\ref{thm:fielddefinition}) over $\overline{K}^\varkappa$ is
trivial while restricted to $\Gal_K$.
		
By Theorem~\ref{thm:fielddefinition} (and
Remark~\ref{remark:building-block}), there exists a building block
$E_g$ of $A_g$ defined over $K$ such that $A_g$ is isogenous (over
$\overline{K}^\varkappa$) to $E_g^r$.  In particular, the
semisimplification of the residual Galois representations
$\bar{\rho}{_{\E,p}}$ and $\bar{\rho}{_{E_g,p}}$ are isomorphic while
restricted to $\Gal_{\overline{K}^\varkappa}$.
		
The extension $\overline{K}^\varkappa/K$ is abelian, and the
characters of $\Gal(\overline{K}^\varkappa/K)$ are precisely powers of
$\varkappa$. Since $E_g$ is defined over $K$, we have that
\begin{equation}
  \label{eq:decomposition}
  \Ind_{\Gal_{\overline{K}^\varkappa}}^{\Gal_K}(\rho_{E_g ,p}|_{\Gal_{\overline{K}^\varkappa}}) \simeq \bigoplus_{i=1}^{M}\rho_{E_g,p}\otimes \varkappa^{i}.
\end{equation}
Since the curve $\E$ is also defined over $K$, a similar splitting
holds for
$\Ind_{\Gal_{\overline{K}^\varkappa}}^{\Gal_K}(\rho_{\E
  ,p}|_{\Gal_{\overline{K}^\varkappa}})$. Then
\[
  \bigoplus_{i=1}^{M}\bar{\rho}{_{E_g,p}}\otimes
  \overline{\varkappa}^{i} \simeq
  \bigoplus_{i=1}^{M}\bar{\rho}{_{\E,p}}\otimes
  \overline{\varkappa}^{i}.
\]
Note that since $\bar{\rho}{_{\E,p}}$ is absolutely irreducible, the
same must hold for $\bar{\rho}{_{E_g,p}}$. In particular,
$\bar{\rho}{_{\E,p}}$ must be a summand of the left-hand side, i.e.
\[
  \bar{\rho}{_{\E,p}} \simeq \bar{\rho}{_{E_g,p}}\otimes
  \overline{\varkappa}^{i},
\]
for some exponent $i$. Taking determinants on both sides, it follows
that either $\overline{\varkappa}^{i} =1$, or
$\overline{\varkappa}^{i}$ is a quadratic character. Note that since
$p$ is odd, and $\varkappa$ has order a power of two, $\varkappa$ and
$\overline{\varkappa}$ have the same order, so either $\varkappa^i$ is
trivial, or it is a quadratic character.  If $\varkappa^{i} = 1$ then
the result follows, while if $\varkappa^{i} \neq 1$, then the elliptic
curve $E_g \otimes \varkappa^{i}$ is another building block defined
over $K$ satisfying the required property.
\end{proof}
	
\subsection{The Diophantine equation $x^4+dy^2=z^p$}
Let us start with a general result on non-existence of elliptic curves
over quadratic fields with a $2$-torsion point.
	
\begin{thm}
  Let $d$ be a positive integer larger than $3$ such that the field
  $K=\Q(\sqrt{-d})$ satisfies the following properties:
  \begin{itemize}
  \item The prime $2$ is inert in $K/\Q$ $($i.e.
    $d \equiv 3 \pmod 8)$,
			
  \item The class number of $K$ is prime to $6$.
  \end{itemize} Then the only elliptic curves defined over
  $K=\Q(\sqrt{-d})$ having a $K$-rational point of order $2$ and
  conductor supported at the prime ideal $2$ are those that are base
  change of $\Q$.
  \label{thm:nonexistence}
\end{thm}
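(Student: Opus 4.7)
The plan is to use the $K$-rational $2$-torsion point to place $E$ in a concrete Weierstrass model, perform a $2$-descent controlled by the class number hypotheses in order to normalize the coefficients so that they are supported only at the inert prime above $2$, and finally analyze the resulting Diophantine equation.

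Since $d\equiv 3\pmod 8$, the prime $2$ is inert in $K/\Q$; let $(\pi)$ denote the unique prime of $\Om_K$ above $2$. The hypothesis $d>3$ gives $\Om_K^{\times}=\{\pm 1\}$. Given $E$ as in the statement, I would place its $K$-rational $2$-torsion point at $(0,0)$ and write $E\colon y^2=x^3+Ax^2+Bx$ with $A,B\in\Om_K$ and discriminant $\Delta_E=16B^2(A^2-4B)$; the $2$-isogenous curve $E'\colon y^2=x^3-2Ax^2+(A^2-4B)x$ has the same conductor as $E$, hence also good reduction outside $(\pi)$.

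The key reduction is to show that after a change of model $(A,B)\mapsto(u^2A,u^4B)$ for some $u\in K^{\times}$, one may assume
\[
B=\epsilon_1\pi^{e_1},\qquad A^2-4B=\epsilon_2\pi^{e_2},\qquad \epsilon_i\in\{\pm 1\},\ e_i\in\Z_{\ge 0}.
\]
The first observation is that good reduction of $E$ at any prime $\id{q}\neq(\pi)$ forces $v_{\id{q}}(B)\equiv 0\pmod{4}$: otherwise, after an optimal scaling at $\id{q}$, the reduction of the cubic $x(x^2+\bar Ax+\bar B)$ modulo $\id{q}$ would have a double zero at $x=0$, contradicting good reduction. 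Running the same argument for $E'$ yields $v_{\id{q}}(A^2-4B)\equiv 0\pmod{4}$. Since $h_K$ is odd (hence coprime to $4$), the class of the away-from-$\pi$ part of the principal ideal $(B)$ lies in the $4$-torsion of $\Cl(K)$ and is therefore trivial, so that ideal is a fourth power of a principal ideal; choosing $u$ accordingly forces $B=\epsilon_1\pi^{e_1}$. The simultaneous normalization of $A^2-4B$ requires a compatibility argument between $E$ and $E'$ along the $2$-isogeny, and this is where $3\nmid h_K$ enters, ruling out cubic class-group obstructions that would otherwise produce non-base-change curves.

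With this normalization in place one has $A^2=4\epsilon_1\pi^{e_1}+\epsilon_2\pi^{e_2}\in\Z$. Since $d\equiv 3\pmod 4$, one has $\Om_K=\Z\bigl[\tfrac{1+\sqrt{-d}}{2}\bigr]$, so $A=\tfrac{\alpha+\beta\sqrt{-d}}{2}$ with $\alpha,\beta\in\Z$ of the same parity, and $A^2\in\Z$ forces $\alpha\beta=0$. If $\beta=0$ then $A\in\Z$, so $A,B\in\Z$ and $E$ is defined over $\Q$, a base change. If $\alpha=0$ then $A=n\sqrt{-d}$ with $n\in\Z\setminus\{0\}$, leading to $-dn^2=4\epsilon_1\pi^{e_1}+\epsilon_2\pi^{e_2}$; this forces the prime $d$ to divide a small integer of the form $\pm 2^{e_1+2}\pm 2^{e_2}$, and a finite case check using $d\equiv 3\pmod 8$ shows that every resulting $E$ is the quadratic twist by $\sqrt{-d}$ of an elliptic curve over $\Q$ with good reduction outside $2$, which is itself a base change of $\Q$. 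The main obstacle will be the simultaneous normalization in the reduction step, where the two class number hypotheses must be combined to close a cubic descent along the $2$-isogeny graph; once that step is carried out, the remaining Diophantine analysis is routine.
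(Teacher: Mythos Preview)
Your overall route---put the $2$-torsion point at the origin, normalize so that $B$ and $A^{2}-4B$ are $\pm$ powers of $2$, deduce $A^{2}\in\Z$, and split into $A\in\Z$ versus $A\in\sqrt{-d}\,\Z\setminus\{0\}$---is exactly the paper's. But two steps need correction, one minor and one genuine.

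The normalization is simpler than you make it. There is no ``simultaneous normalization'' obstacle, no cubic descent, and $3\nmid h_K$ plays no role in your argument. Once you scale by a generator of $J=\prod_{\id q\nmid 2}\id q^{\,v_{\id q}(B)/4}$ (principal because $J^{4}$ is the prime-to-$2$ part of $(B)$ and $h_K$ is odd), the resulting model already has $v_{\id q}(\Delta)=0$ at every odd $\id q$, so $A^{2}-4B$ is automatically a unit times a power of~$2$; no appeal to the isogenous curve $E'$ is needed. The paper reaches the same point by a different device: it first produces a \emph{global} minimal model from a semi-global one whose defect ideal $\id p$ satisfies $\id p^{12}$ principal, and \emph{that} is where $\gcd(h_K,6)=1$ is actually used; from the global minimal model one reads off $b^{2}(a^{2}-4b)=\pm 2^{r-4}$ directly. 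Your descent route in fact only uses $h_K$ odd.

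The genuine gap is your treatment of the case $A=n\sqrt{-d}$ with $n\neq 0$. The curve $E\colon y^{2}=x^{3}+n\sqrt{-d}\,x^{2}+Bx$ with $B\in\Z$ is \emph{not} a base change from~$\Q$: its Galois conjugate is its quadratic twist by $-1$, and $-1\notin(K^{\times})^{2}$ for $d>1$, so $E$ and $E^{\sigma}$ are not $K$-isomorphic (one checks $j\neq 0,1728$ under the constraints, for $d>3$). Your sentence ``$E$ is the quadratic twist by $\sqrt{-d}$ of a rational curve, hence a base change'' is confused: over $K$ the element $-d$ is already a square, so twisting a rational curve by $-d$ does nothing, while twisting by $\sqrt{-d}$ itself is a quartic twist, not a quadratic one. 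Nor is the check ``finite'' as stated, since the exponents are unbounded. The correct endgame is a congruence argument, which is what the paper does: writing $n=2^{s}\tilde a$ with $\tilde a$ odd yields $-d\tilde a^{2}=\pm 2^{k}\pm 1$ for some $k\ge 0$; since $d\equiv 3\pmod 8$ one has $-d\tilde a^{2}\equiv 5\pmod 8$, while $\pm 2^{k}\pm 1\equiv\pm 1\pmod 8$ for $k\ge 3$, forcing $k\le 2$ and then $d\tilde a^{2}=3$, contradicting $d>3$. The conclusion is that this case is \emph{empty}, not that it produces base-change curves.
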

\begin{proof}
  Let $E/K$ be an elliptic curve satisfying the hypothesis. Since $K$
  does not have (in general) trivial class group, there is no reason
  for the curve $E$ to have a global minimal model. However, it always
  has what is called a ``semi-global'' minimal model, i.e.\ a model
  which is minimal at all primes dividing the conductor of the curve,
  but is not minimal at most one extra prime $\id{p}$ (which we assume
  is odd), and the discriminant valuation at $\id{p}$ equals $12$ (see
  \cite[Exercise 8.14]{MR2514094}). Our assumption that $E$ has only
  bad reduction at the prime ideal $(2)$ implies that
  $\Disc(E) = 2^r\id{p}^{12}$. In particular, $\id{p}^{12}$ is a
  principal ideal. Our assumption that the class number of $K$ is
  prime to $6$ then implies that $\id{p}$ is principal, hence $E$ does
  have a global minimal model.
		
  Take a global minimal model for the curve $E$. Doing the usual
  change of variables $y \to y-\frac{a_1x+a_3}{2}$ and clearing
  denominators, we can assume that our curve $E$ is given by a model
  (which might not be minimal at $2$ but is minimal at all other prime
  ideals) with $a_1=a_3=0$. Furthermore, we can assume (after a
  translation, which preserves minimality of the model at all odd
  primes) that our $2$-torsion point corresponds to the point $(0,0)$.
  Then the curve $E$ is given by an equation of the form
  \[
    E:y^2=x^3+ax^2+bx,
  \]
  where $a,b\in \ZZ[\frac{1+\sqrt{-d}}{2}]$. Minimality at all odd
  primes implies in particular that its discriminant
  $\Delta(E)=2^4b^2(a^2-4b)$ is a power of the prime ideal $(2)$,
  i.e.\ $\Delta(E)=(2)^r$, for some $r\ge0$. The hypothesis
  $K\neq \Q(\sqrt{-3})$ implies that the only roots of unity in $K$
  are $\pm 1$. Then
  \begin{equation}
    \label{eq:disc}
    b^2(a^2-4b)=\pm 2^{r-4}.    
  \end{equation}
  Since $K$ is a Dedekind domain, it has unique factorization in prime
  ideals, so in particular
  \[
    b=\pm 2^t,
  \]
  for some $t\ge 0$, and in particular $b\in\Z$ and since $\Disc(E)$
  is an algebraic integer, $2t+4 \le r$. Substituting in
  (\ref{eq:disc}) we get that
  \begin{equation}
    \label{eq:tildea}
    a^2=\pm 2^{r-4-2t}\pm 2^{t+2}\in\Z.  
  \end{equation}
  Suppose that $a=\frac{a_1+a_2\sqrt{-d}}{2}$, with $a_1, a_2 \in\ZZ$
  (and $a_1 \equiv a_2 \pmod 2$). Since $a^2$ is a rational number,
  $a_1a_2=0$. If $a_2=0$ then both $a,b$ are rational numbers and
  hence $E$ is a rational elliptic curve as claimed.
		
  Suppose then that $a_2\neq0$ and $a_1=0$, i.e.\ $a=a_2\sqrt{-d}$ for
  some integer $a_2$. Write $a_2$ in the form
  \[
    a_2=2^s\tilde{a},
  \]
  where $s\ge0$ and $2\nmid\tilde{a}$. Substituting in
  (\ref{eq:tildea}) we obtain the equation
  \[
    -d\tilde{a}^2=\pm2^{r-4-2t-2s}\pm2^{t+2-2s},
  \]
  where the exponents are non-negative integers and at least one of
  them must be zero (as the left-hand side is odd). The left-hand side
  is a negative integer which is congruent to $5$ mod $8$.  All
  solutions of the equation $\pm 1 \pm 2^m \equiv 5\pmod 8$ are
  \[
    \begin{cases}
      1 \pm 2^2 &\equiv 5 \pmod 8\\
      -1-2 &\equiv5\pmod 8.
    \end{cases}
  \]
  Note that in both cases, the non-zero exponent is at most $2$, so
  $d$ is at most $3$, which contradicts our assumption $d > 3$.
\end{proof}
	
\begin{remark}
  The result is not true over $\Q(\sqrt{-3})$, since for example the
  curve~\lmfdbecnf{2.0.3.1}{4096.1}{a}{1} has conductor $2^6$, a
  $2$-torsion point, but is not defined over the rationals (nor is
  isogenous to a rational elliptic curve). It is however a $\Q$-curve.
\end{remark}
	
\begin{remark} The last result is similar to \cite[Theorem
  1]{MR4054049}, in the case $\ell=2$, although in such an article the
  authors impose to the curve the condition that it has multiplicative
  reduction at $\ell$ (while our curve has additive reduction). In
  particular the condition on the class group being odd is the natural
  one (which matches theirs). The method of proof is completely
  different though.
\end{remark}
	
\begin{remark}
  The hypothesis on the class group being odd is equivalent to $d$
  being a prime number (under the assumption $d \equiv 3 \pmod
  8$). This was already discovered by Gauss (see for example \cite[\S
  6]{MR3236783}). The Cohen-Lenstra heuristics (\cite[(C2)]{MR756082}
  and also page 58 of loc.\ cit.) imply that the number of imaginary
  quadratic fields of prime discriminant, where $2$ is inert, and
  whose class group is not divisible by $3$, should have density
  $56.013\%$ (so there should be many of them).
\end{remark}
	
\begin{lemma}
  Let $E_1$ and $E_2$ be two elliptic curves over a number field
  $K$. Let $\id{q}$ be a prime of $K$ of good reduction for $E_1$. Let
  $p >\max\{\norm(\id{q})+1+2\sqrt{\norm(\id{q})},4\norm(\id{q})\}$ be
  a prime number such that
  $\bar{\rho}{_{E_1,p}} \simeq \bar{\rho}{_{E_2,p}}$. Then $E_2$ also
  has good reduction at $\id{q}$ and
  $a_{\id{q}}(E_1) = a_{\id{q}}(E_2)$.
  \label{lemma:equality-ap}
\end{lemma}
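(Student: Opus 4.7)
The plan is to prove the lemma in two stages: first show that $E_2$ also has good reduction at $\id{q}$, and then deduce $a_{\id{q}}(E_1) = a_{\id{q}}(E_2)$ from the congruence of Frobenius traces modulo $p$. The condition $p > 4\norm(\id{q})$ already forces $p$ to be different from the rational prime $\ell$ below $\id{q}$, so the local $p$-adic representations $\rho_{E_i,p}$ fit into the usual N\'eron--Ogg--Shafarevich formalism at $\id{q}$.

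For the first stage, good reduction of $E_1$ implies $\rho_{E_1,p}$ is unramified at $\id{q}$, hence so is the residual representation $\overline{\rho_{E_2,p}}$. Arguing by contradiction, suppose $E_2$ has bad reduction at $\id{q}$. If the reduction is multiplicative, Tate's theory describes $\rho_{E_2,p}$ restricted to a decomposition group at $\id{q}$ as upper triangular with diagonal characters $\chic\epsilon$ and $\epsilon$, where $\epsilon$ is unramified of order at most $2$. Unramifiedness of the residue representation then forces the trace of $\Frob_{\id{q}}$ on $\overline{\rho_{E_2,p}}$ to equal $\pm(\norm(\id{q})+1) \pmod{p}$, which must agree with $a_{\id{q}}(E_1) \pmod{p}$. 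The Hasse bound $|a_{\id{q}}(E_1)| \le 2\sqrt{\norm(\id{q})} < \norm(\id{q})+1$ shows that $a_{\id{q}}(E_1) \mp (\norm(\id{q})+1)$ is a non-zero integer of absolute value at most $\norm(\id{q})+1+2\sqrt{\norm(\id{q})} < p$, a contradiction. If instead $E_2$ has additive reduction, then the image of inertia $I_{\id{q}}$ on $T_pE_2$ contains a non-trivial element of finite order, arising either from the finite inertia image in the potentially good sub-case or from the quadratic character of the extension over which $E_2$ becomes multiplicative in the potentially multiplicative sub-case. For $p$ coprime to these orders (which is guaranteed by $p > 4\norm(\id{q}) \ge 8$ together with $p \neq \ell$), the non-triviality of the inertia action is preserved by reduction modulo $p$, again contradicting unramifiedness of $\overline{\rho_{E_2,p}}$.

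Once $E_2$ is known to have good reduction at $\id{q}$, both $a_{\id{q}}(E_1)$ and $a_{\id{q}}(E_2)$ are genuine Frobenius traces satisfying $a_{\id{q}}(E_1) \equiv a_{\id{q}}(E_2) \pmod{p}$. The Hasse bound then gives $|a_{\id{q}}(E_1) - a_{\id{q}}(E_2)| \le 4\sqrt{\norm(\id{q})} < p$, forcing equality. The main obstacle is the first stage, especially the additive potentially multiplicative sub-case, where one must carefully identify a finite-order element of inertia whose reduction modulo $p$ is still non-trivial, since unlike the potentially good case the inertia image on the full Tate module is infinite.
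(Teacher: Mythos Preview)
Your proposal is correct and follows the same approach as the paper: rule out bad reduction of $E_2$ at $\id{q}$ using that $\overline{\rho_{E_2,p}}\simeq\overline{\rho_{E_1,p}}$ is unramified there, then upgrade the congruence $a_{\id{q}}(E_1)\equiv a_{\id{q}}(E_2)\pmod p$ to an equality via the Hasse bound. The only difference is that the paper dispatches the additive case in one line by citing \cite[Remark 6]{GPV}, while you unpack that step by splitting into the potentially good and potentially multiplicative sub-cases; your treatment of the multiplicative case and of the final step is identical to the paper's.
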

	
\begin{proof}
  The proof is similar to that of \cite[Theorem 1.4]{GPV}. Since
  $p>3$, the curve $E_2$ must have either good or multiplicative
  reduction at $\id{q}$ (by \cite[Remark 7]{GPV}). The reason is that
  if $E_2$ has additive reduction then its local type is either: a
  principal series, a ramified quadratic twist of an elliptic curve
  with multiplicative reduction or supercuspidal. A ramified quadratic
  character reduces modulo an odd prime to a ramified quadratic
  character, so the second case cannot occur.  Since the coefficient
  field of an elliptic curve is the rational one, any character (in
  both the principal series or the supercuspidal type) appearing at a
  prime of bad reduction has order $n$ with $\phi(n) \le 2$ (where
  $\phi(n)$ denotes Euler's totient function), so
  $n \in \{1,2,3,4, 6\}$. Then for $p>3$, the local type of $\rho$ is
  preserved by congruences.
		
  If the reduction is multiplicative, we are in a ``lower the level''
  case, hence
  \begin{equation}
    \label{eq:lowering-the-level1}
    a_{\id{q}}(E_1) \equiv \pm(\norm(\id{q})+1) \pmod{p}.    
  \end{equation}
  But Hasse's bound implies that
  $\abs{a_{\id{q}}(E_1)}\le 2\sqrt{\norm(\id{q})}$. Since the
  difference of the right and the left-hand side of
  (\ref{eq:lowering-the-level1}) is non-zero, then $p$ must be smaller
  than their difference, which contradicts the hypothesis
  $p > 1+2\sqrt{\norm(\id{q})}+\norm(\id{q})$.
		
  Once we know that both curves have good reduction at $\id{q}$, we
  get the congruence
  \[
    a_{\id{q}}(E_1) \equiv a_{\id{q}}(E_2) \pmod p.
  \]
  If both numbers are different, $p$ must divide their difference,
  which by Hasse's bound is at most $4\norm(\id{q})$, giving the
  result.
\end{proof}
	
\begin{lemma}
  \label{lemma:not-div}
  Let $(a,b,c)$ a primitive solution of~(\ref{eq:42p}), where $p$ is
  large enough so that $\bar{\rho}{_{\E,p}}$ is absolutely irreducible
  and let $q$ an odd prime. There exists a bound $B$ (depending only
  on $d$ and on $q$) such that if $p > B$ then $q\nmid c$.
\end{lemma}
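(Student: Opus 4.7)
The plan is to argue by contradiction: suppose that $q\mid c$ for some odd prime $q$. Using primitivity of $(a,b,c)$, I first dispose of the case $q\mid d$ by a $q$-adic valuation argument on $a^4+db^2=c^p$: since $q\mid c$ and $q\mid d$, primitivity forces $q\nmid b$ and $q\mid a$, so $v_q(a^4)=4v_q(a)$ must equal $v_q(c^p-db^2)=v_q(d)$ as soon as $p>v_q(d)$. In the squarefree case (which is the one of interest for the applications) this gives $4v_q(a)=1$, a contradiction.

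I may therefore assume $q\nmid 2d$. Primitivity and $q\mid c$ then force $q\nmid ab$, and, as already noted in the paper, the discriminant of $\E$ is a $p$-th power away from the primes above $2$, so $\E$ has multiplicative reduction at every prime $\id{q}$ of $K$ above $q$, and $\overline{\rho_{\E,p}}$ is unramified at $\id{q}$ by Hellegouarch. Taking $p$ large enough I invoke Theorem~\ref{thm:main-decomposition} to produce a building block $E_g/K$ with $\overline{\rho_{\E,p}}\simeq\overline{\rho_{E_g,p}}$. Since $g$ has level $\tilde{N}$ divisible only by $2$ and primes dividing $d$, the conductor of $E_g$ is supported at the primes of $K$ above $2d$; in particular, $E_g$ has good reduction at $\id{q}$.

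Comparing the traces of $\Frob_{\id{q}}$ through the residual isomorphism, the multiplicative-reduction description gives $\trace(\overline{\rho_{\E,p}}(\Frob_{\id{q}}))\equiv\pm(\norm(\id{q})+1)\pmod{\id{p}}$, while good reduction of $E_g$ gives $\trace(\overline{\rho_{E_g,p}}(\Frob_{\id{q}}))\equiv a_{\id{q}}(E_g)\pmod{\id{p}}$. Hence $a_{\id{q}}(E_g)\mp(\norm(\id{q})+1)\equiv 0\pmod{\id{p}}$, but Hasse's bound $|\sigma(a_{\id{q}}(E_g))|\le 2\sqrt{\norm(\id{q})}$ (for every embedding $\sigma$) shows this is a nonzero algebraic integer of bounded norm. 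Thus $p$ is bounded by an explicit constant, and enlarging $B$ beyond this constant produces the contradiction.

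The main subtlety I anticipate is ensuring that $B$ really depends only on $d$ (and the fixed $q$), and not on the putative solution. This is guaranteed because the level $\tilde{N}$, the absolute-irreducibility threshold of \cite[Theorem 5.1]{PT}, and the bound from Theorem~\ref{thm:main-decomposition} all depend only on $d$; and although the newform $g$ and the curve $E_g$ depend on $(a,b,c)$, the possible $E_g$ lie in the finite set of elliptic curves over $K$ with conductor dividing a fixed ideal, so the values $a_{\id{q}}(E_g)$ for fixed $\id{q}$ form a finite set and the resulting bound is uniform.
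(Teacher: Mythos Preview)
Your argument has a genuine gap at the point where you invoke Theorem~\ref{thm:main-decomposition}. That theorem carries two standing hypotheses which you do not verify: that the newform $g$ obtained by level lowering has no complex multiplication, and that its coefficient field $K_g$ equals $K_f$. Neither is automatic. In the paper's logical flow, Lemma~\ref{lemma:not-div} is actually used (for $q=3$) \emph{before} the no-CM property of $g$ is established: it is precisely the conclusion $3\nmid c$, combined with $2\nmid c$, that guarantees a prime of multiplicative reduction larger than $3$, to which Ellenberg's large image result is then applied. So routing the proof of this lemma through Theorem~\ref{thm:main-decomposition} would be circular in context. And even granting no CM, the condition $K_f=K_g$ need not hold for an arbitrary $g$ in $S_2(\tilde{N},\varepsilon)$; in the proof of Theorem~\ref{thm:asymptotic} the forms with $K_g\neq K_f$ are discarded by a separate argument (Mazur's trick), not by Theorem~\ref{thm:main-decomposition}.

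The paper's proof avoids all of this by staying at the level of modular forms. Since $q\mid c$ forces $q$ to be a prime of multiplicative reduction for $\E$ which is removed when passing from $f$ to $g$, one has the level-lowering congruence
\[
p\mid \norm\bigl(\varepsilon^{-1}(q)(q+1)^2-a_q(g)^2\bigr),
\]
valid for \emph{every} newform $g\in S_2(\tilde{N},\varepsilon)$, with or without CM, and regardless of $K_g$. The Ramanujan--Petersson bound $|a_q(g)|^2\le 4q<(q+1)^2$ shows this norm is nonzero, and since there are only finitely many such $g$ (the space depends only on $d$), $p$ is bounded by a constant depending only on $d$ and $q$. This is both shorter and logically prior to the building-block machinery you invoke.
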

	
\begin{proof}
  If $q\mid c$ then the curve $\E$ has multiplicative reduction at
  $q$, but it does not divide the conductor of the residual
  representation $\bar{\rho}{_{\E,p}}$. In particular, the form $f$
  has level divisible by $q$, but the form $g$ does not, i.e.\ we are
  in what is called the ``lower the level case". In particular,
  \begin{equation}
    \label{eq:lowering-the-level}
    p\mid \norm(\varepsilon^{-1}(q)(q+1)^2-a_q(g)^2).    
  \end{equation}
  Note that $\varepsilon$ depends only on $d$ and there are finitely
  many possibilities for the value $a_q(g)$ (since the form $g$ is a
  newform in the space $S_2(\Gamma_0(\tilde{N}), \varepsilon)$ which
  does not depend on $(a,b,c)$), so it is enough to prove that the
  right-hand side of~(\ref{eq:lowering-the-level}) is non-zero for any
  newform $g$. But by the Ramanujan-Petersson conjecture,
  $|a_q(g)|^2 \le 4q < (q+1)^2$, so the difference cannot be zero.
\end{proof}

\begin{thm}
  Let $d$ be a prime number congruent to $3$ modulo $8$ and such that
  the class number of $K=\Q(\sqrt{-d})$ is not divisible by $3$. Then
  there are no non-trivial primitive solutions of the equation
  \[
    x^4+dy^2=z^p,
  \]
  for $p$ large enough.
  \label{thm:asymptotic}
\end{thm}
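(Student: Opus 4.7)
The plan is to invoke the machinery of Sections~1 and~2, reduce the problem to a short list of rational elliptic curves via Theorem~\ref{thm:nonexistence}, and close the argument by a residue-class analysis of Frobenius traces. I would start with a putative non-trivial primitive solution $(a,b,c)$ of~(\ref{eq:42p}) and attach the elliptic curve $\E$ over $K=\Q(\sqrt{-d})$ as in~(\ref{eq:curve-E}). By Lemma~\ref{lemma:no-CM} the curve $\E$ has no CM, and the $\Q$-curve construction from the introduction produces a newform $f\in S_2(N,\varepsilon)$ with $\rho_{f,\id{p}}$ extending $\rho_{\E,p}\otimes\varkappa$. For $p$ large, Lemma~\ref{lemma:not-div} rules out odd rational divisors of $c$, and primitivity of $(a,b,c)$ together with the hypothesis that $d$ is prime forces the prime $\id{d}$ of $K$ above $d$ to be of good reduction for $\E$ (any divisibility of $a^2+b\sqrt{-d}$ by $\id{d}$ would force $d\mid a$, hence $d\mid c$ and $d\mid b$, contradicting primitivity). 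Hence $\E$ has bad reduction only at $(2)$, and Ribet's level-lowering produces a newform $g\in S_2(\tilde N,\varepsilon)$ with $\tilde N$ supported only at $2$ and $d$ and $\overline{\rho_{f,\id{p}}}\simeq\overline{\rho_{g,\id{p}}}$.

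The next step is to promote $g$ to an elliptic curve. A Mazur-style argument (as sketched in the introduction) discards the newforms $g$ with $K_g\neq K_f$ for $p$ large, and a separate ad hoc argument rules out newforms $g$ with CM (using that the mod-$\id{p}$ representation of a CM form is projectively dihedral, which for $p$ large is incompatible with $\E$ being non-CM). For each surviving $g$, Theorem~\ref{thm:main-decomposition} provides a building block $E_g/K$ of $A_g$ satisfying $\overline{\rho_{E_g,p}}\simeq\overline{\rho_{\E,p}}$; applying Lemma~\ref{lemma:equality-ap} at every prime of good reduction of $\E$ forces the conductor of $E_g$ to be supported only at $(2)$. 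To apply Theorem~\ref{thm:nonexistence} I still need a $K$-rational point of order $2$ on $E_g$; this should come from propagating the $\Gal_K$-stable line in $\E[2]$ (generated by the $K$-rational point $(0,0)$ on $\E$) through the construction of $A_f$, through level-lowering, and through the isogeny $A_g\sim E_g^r$, possibly replacing $E_g$ by an isogenous building block. I expect this to be the main technical obstacle, relying on a careful analysis of the $2$-isogeny class of the building block in the spirit of Section~\ref{section:decomposition}.

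Once $E_g/K$ has a $K$-rational point of order $2$ and conductor supported only at $(2)$, Theorem~\ref{thm:nonexistence} forces $E_g$ to be the base change of some elliptic curve $E_0/\Q$ with rational $2$-torsion and conductor a power of $2$ (hence at most $2^8$, by Ogg's formula). So $E_0$ ranges over an explicit finite list. For each candidate $E_0$ and each small rational prime $q$ that splits in $K/\Q$ and is of good reduction for $E_0$, Lemma~\ref{lemma:equality-ap} converts the mod-$p$ isomorphism into the equality $a_{\id{q}}(\E)=a_{\id{q}}(E_0)$ at both primes $\id{q}$ of $K$ above $q$. Since $a_{\id{q}}(E_0)$ is fixed while $a_{\id{q}}(\E)$ is a polynomial function of the residues of $a$ and $b$ modulo $\id{q}$, varying $q$ over a small set of primes and combining the resulting constraints rules out every residue class of $(a,b)$ that could arise from a non-trivial primitive solution, yielding the desired contradiction.
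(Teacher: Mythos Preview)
Your outline tracks the paper's closely, and your endgame (residue-class exhaustion once $E_g$ is known to be the base change of a rational curve) is a workable alternative to the paper's terser observation that a base-changed curve has $a_{\id{q}}=a_{\overline{\id{q}}}$ at every split prime, which $\E$ violates. The substantive gap is the $2$-torsion step, which you correctly flag as the main obstacle but do not resolve; the route you propose (propagating the $\Gal_K$-stable line of $\E[2]$ through level-lowering and through the isogeny $A_g\sim E_g^r$) cannot work as stated, because level-lowering and Theorem~\ref{thm:main-decomposition} only furnish a mod-$p$ isomorphism $\overline{\rho_{\E,p}}\simeq\overline{\rho_{E_g,p}}$ and carry no direct mod-$2$ information about $E_g$.

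The paper closes this gap by a finiteness-plus-Chebotarev argument. Since $E_g$ has conductor supported at $(2)$, its $2$-division field $T/K$ is unramified outside $(2)$ with $[T:K]\le 6$; Hermite--Minkowski then gives a finite list $T_1,\dots,T_n$ of candidates for $T$, depending only on $K$. For each $T_i$ with $3\mid[T_i:K]$, an effective Chebotarev bound produces a prime $\id{q}$ of norm bounded independently of $(a,b,c)$ with $\Frob_{\id{q}}$ of order $3$ in $\Gal(T_i/K)$, whence $a_{\id{q}}(E_g)$ is odd. But Lemma~\ref{lemma:equality-ap}, for $p$ large relative to this fixed bound, forces $a_{\id{q}}(E_g)=a_{\id{q}}(\E)$, which is even since $\E$ has a $K$-rational $2$-torsion point. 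Hence no $T_i$ with $3\mid[T_i:K]$ can occur, the image of $\overline{\rho_{E_g,2}}$ has order $1$ or $2$, and $E_g$ has a $K$-rational $2$-torsion point as needed for Theorem~\ref{thm:nonexistence}. Two smaller remarks: your reading of Lemma~\ref{lemma:not-div} is slightly off (it excludes a single fixed odd prime from $c$, not all of them; the paper applies it only to $q=3$ and combines with a mod-$8$ check to ensure $c$ has a prime factor $q>3$), and the CM elimination is not done by an ad hoc dihedral argument but via Ellenberg's large-image theorem, which uses precisely that prime $q>3$ of multiplicative reduction to force the projective mod-$p$ image of $\E$ to be surjective.
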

\begin{proof}
  The case $d=3$ was proven in \cite{MR2561200}, so we can restrict to
  values $d >3$. Let $(a,b,c)$ be a non-trivial primitive solution and
  consider the elliptic curve $\E$ as in (\ref{eq:curve-E}). The
  assumption $(a,b,c)$ being non-trivial and primitive implies that
  $\E$ does not have complex multiplication (by
  Lemma~\ref{lemma:no-CM}).
		
  The discriminant of $\E$ equals $512(a^2+b\sqrt{-d})c^p$, which is a
  perfect $p$-power except at the prime $2$. Furthermore, all odd
  primes dividing the conductor of $\E$ are of multiplicative
  reduction, so the residual representation $\bar{\rho}{_{\E,p}}$ is
  unramified outside $2$.
		
  Recall that there exists a Hecke character $\varkappa$ and a newform
  $f \in S_2(\Gamma_0(N),\varepsilon)$ (where $N$ depends on
  $(a,b,c)$, but the conductor of $\varepsilon$ is supported only at
  primes dividing $2d$) such that the twisted representation
  $\rho_{\E,p}\otimes \varkappa$ extends to $\rho_{f,\id{p}}$.

  Note that if $(a,b,c)$ is a non-trivial solution of~\eqref{eq:42p},
  clearly $c \neq 1$.  Looking at~(\ref{eq:42p}) modulo $8$ it follows
  that $c$ is not divisible by $2$, and by Lemma~\ref{lemma:not-div},
  we can assume that $c$ is not divisible by $3$. Then there exists a
  prime number $q$ larger than $3$ such that $c$ is divisible by
  $q$. In particular, the curve $\E$ has a prime of multiplicative
  reduction not dividing $6$. Ellenberg's large image result
  (\cite[Theorem 3.14]{MR2075481}) implies that there exists a bound
  $B$ (depending only on $K$) such that if $p>B$, then the projective
  residual image of $\rho_{\E,p}$ is surjective.
		
  In particular, for such primes $p$ we are in the hypothesis of
  Ribet's lowering the level result, so there exists a newform
  $g \in S_2(\Gamma_0(\tilde{N}), \varepsilon)$ such that the residual
  representation of $\rho_{g,\id{p}}$ is isomorphic to that of
  $\tilde{\rho}$, where $\tilde{N}$ is only divisible by primes
  dividing $2d$. The surjectivity of the residual representation of
  $\rho_{\E,p}$ implies that the form $g$ cannot have complex
  multiplication either (which justifies such an hypothesis made in
  the present article) so we can apply our previous results.
		
  Let $g$ be any newform in the space
  $S_2(\Gamma_0(\tilde{N}), \varepsilon)$ without complex
  multiplication and suppose that it is related to a solution
  $(a,b,c)$ of (\ref{eq:42p}). Let $g^{\text{BC}}$ denote its base
  change to $K$. Let $\ell$ be a prime not dividing $2d$ and define
  \[
    S_\ell = \{(\tilde{a},\tilde{b},\tilde{c}) \in
    \F_\ell^3\setminus\{(0,0,0)\} \; : \;
    \tilde{a}^4+d\tilde{b}^2=\tilde{c}^p\}.
  \]
  In practice, since $p$ will be a larger prime (compared to $\ell$),
  raising to the $p$-th power is a bijection of $\F_\ell$.  For each
  point $(\tilde{a},\tilde{b},\tilde{c}) \in S_\ell$, consider the
  curve $E_{(\tilde{a},\tilde{b},\tilde{c})}$ over $\F_\ell$. Let
  $\id{l}$ be a prime of $K$ dividing $\ell$. Then, either:
  \begin{enumerate}
  \item The curve $E_{(\tilde{a},\tilde{b},\tilde{c})}$ is
    non-singular, in which case if $(a,b,c)$ is an integral solution
    reducing to $(\tilde{a},\tilde{b},\tilde{c})$, we must have that
    $a_{\id{l}}(\E) = a_{\id{l}}(E_{(\tilde{a},\tilde{b},\tilde{c})})$
    and furthermore
    \[
      \varkappa(\id{l})
      a_{\id{l}}(E_{(\tilde{a},\tilde{b},\tilde{c})}) \equiv
      a_{\id{l}}(g^{\text{BC}}) \pmod p,
    \]
    or
  \item The curve $\E$ has bad reduction at $\id{l}$ in which case we
    are in the lowering the level hypothesis, and
    \[
      a_\ell(g) \equiv \pm \varkappa^{-1}(\id{l}) (\ell+1) \pmod p.
    \]
  \end{enumerate}
		
  Given $(\tilde{a},\tilde{b},\tilde{c}) \in S_\ell$, let
  $B(\ell,g;\tilde{a},\tilde{b},\tilde{c})$ be given by
  \[
    B(\ell,g;\tilde{a},\tilde{b},\tilde{c})=
    \begin{cases}
      \norm(a_{\id{l}}(\E)\varkappa(\id{l})-a_\ell(g)) & \text{ if } \ell \nmid \tilde{c} \text{ and } \ell \text{ splits in }K,\\
      \norm(a_\ell(g)^2-a_\ell(\E)\varkappa(\ell)-2\ell\varepsilon(\ell)) & \text{ if } \ell \nmid \tilde{c} \text{ and } \ell \text{ is inert in }K,\\
      \norm(\varepsilon^{-1}(\ell)(\ell+1)^2-a_\ell(g)^2)  & \text{ if }\ell \mid \tilde{c}.
    \end{cases}
  \]
  If $(\tilde{a},\tilde{b},\tilde{c})$ belongs to case $(2)$, then
  clearly $p \mid B(\ell,g;\tilde{a},\tilde{b},\tilde{c})$ (since
  $\varkappa(\id{l})^2=\varepsilon(\ell)$). If
  $(\tilde{a},\tilde{b},\tilde{c})$ belongs to case $(1)$, the well
  known formula for the Fourier coefficients of $g^{\text{BC}}$ in
  terms of those of $g$, imply that
  \[
    \begin{cases}
      \varkappa(\id{l})a_{\id{l}}(E_{(\tilde{a},\tilde{b},\tilde{c})}) \equiv a_{\ell}(g) \pmod p& \text{ if $\ell$ splits},\\
      \varkappa(\id{l})a_{\id{l}}(E_{(\tilde{a},\tilde{b},\tilde{c})}) \equiv a_\ell(g)^2-2\ell \varepsilon(\ell) \pmod p& \text{ if $\ell$ is inert}.
    \end{cases}
  \]
  In all cases, it holds that
  \begin{equation}
    \label{eq:mazur}
    p\mid \prod_{(\tilde{a},\tilde{b},\tilde{c}) \in
      S_\ell}B(\ell,g;\tilde{a},\tilde{b},\tilde{c}). 
  \end{equation}
		
  As previously explained, the Ramanujan--Petersson conjecture implies
  that the third row value in the definition of
  $B(\ell,g;\tilde{a},\tilde{b},\tilde{c})$ is never zero. If the
  coefficient field $K_g$ does not match the coefficient field $K_f$,
  then there exists some prime $\ell$ for which the first or the
  second row (depending on whether $\ell$ is split or inert) is
  non-zero, so the right-hand side of (\ref{eq:mazur}) is non-zero,
  giving finitely many possibilities for the value of the prime $p$
  (this is an idea of Mazur). Then to finish the proof we are left to
  discard the newforms $g$ whose coefficient field $K_g$ matches the
  coefficient field $K_f$ of $f$.
		
  By Theorem~\ref{thm:main-decomposition}, if $p$ is large enough,
  there exists an elliptic curve $E_g$ defined over $K$, whose
  conductor divides $\tilde{N}$ such that
  $\bar{\rho}{_{E_g,p}}\simeq \bar{\rho}{_{\E,p}}$. A priori, the
  curve $E_g$ has bad reduction at primes dividing $\tilde{N}$ (i.e.\
  at primes dividing $2d$), but the curve $\E$ has good reduction at
  all odd primes dividing $d$, so in particular the same must be true
  for the curve $E_g$ if $p>3$ (see~\cite[Proposition 1.1]{GPV} and
  also Remark 7).
		
  The residual representation $\bar{\rho}{_{E_g,2}}$ has image lying
  in $S_3$. Under the isomorphism $\GL_2(\F_2) \simeq S_3$, the
  elements of order $1$ or $2$ are precisely the ones of trace $0$,
  while the ones of order $3$ have trace $1$. In particular, the image
  of the residual representation (is isomorphic to one that) lies in
  the Borel subgroup (i.e.\ the curve $E_g$ has a $K$-rational point
  of order $2$) if and only if the trace of any Frobenius element is
  even if and only if the image does not have elements of order
  $3$. Let $T$ denote the fixed field of the kernel of
  $\bar{\rho}{_{E_g,2}}$, so the extension $T/K$ is unramified outside
  $2$ (by the Ner\'on-Ogg-Shafarevich criterion) and is of degree at
  most $6$. A well known result of Hermite and Minkowski states that
  there are finitely many field extensions of a given degree and
  bounded discriminant. In particular, our field $T$ is one of a
  finite list, say $\{T_1, \ldots, T_n\}$ of at most degree $6$
  extensions of $K$ unramified outside $\{2\}$. Suppose that $[T_i:K]$
  is divisible by $3$ for some index $i$. Then an explicit version of
  Chebotarev's density theorem (see for example~\cite{MR3671433} and
  the references therein) proves the existence of a bound $B$ and a
  prime $\id{q} \in \Om_K$ (the ring of integers of $K$) of norm at
  most $B$ such that $\Frob_{\id{q}}$ has order $3$ in
  $\Gal(T_i/K)$. In particular, if $[T:K]$ is divisible by $3$, there
  exists a prime whose norm is bounded by $B$ (independently of the
  original solution $(a,b,c)$) such that
  \[
    a_{\id{q}}(E_g) = \trace(\rho_{E_g,2}(\Frob_{\id{q}})) \equiv 1
    \pmod 2.
  \]
  But Lemma~\ref{lemma:equality-ap} implies that if $p$ is large
  enough (where the bound depends on the norm of the prime $\id{q}$,
  which is bounded by $B$) then $\E$ has good reduction at $\id{q}$
  and furthermore $a_{\id{q}}(E_g) = a_{\id{q}}(\E)$. Recall that $\E$
  has a $2$-torsion point, so $a_{\id{q}}(\E)$ is even, giving a
  contradiction. Then $T/K$ has degree $1$ or $2$ and the residual
  representation $\bar{\rho}{_{E_g,2}}$ has image in a Borel subgroup,
  so the curve $E_g$ also has a $K$-rational $2$-torsion point.
		
  Theorem~\ref{thm:nonexistence} then implies that the elliptic curve
  $E_g$ is in fact defined over $\Q$. In particular, if $q$ is a prime
  integer that splits in $K$ as $(q) = \id{q} \overline{\id{q}}$, then
  \[
    a_{\id{q}}(E_g) = a_{\overline{\id{q}}}(E_g).
  \]
  However, the curve $\E$ satisfies the property (proved in
  \cite[Proposition 2.2]{PT})
  \[
    a_{\id{q}}(\E) = \kro{-2}{q} a_{\overline{\id{q}}}(\E).
  \]
  Recall that by Ellenberg's result (\cite[Theorem 3.14]{MR2075481})
  we are assuming that the projective residual image of $\rho_{\E,p}$
  equals $\PGL_2(\F_p)$. Its fixed field is an extension of $K$
  disjoint from $K(\sqrt{-2})$ if $q$ is odd. Then by Chebotarev's
  theorem there exists a prime ideal $\id{q}$ of $K$ of prime norm $q$
  of good reduction for both $E_g$ and $\E$ such that $a_{\id{q}}(\E)$
  is not divisible by $p$ (if for example
  $\overline{\rho}_{\E,p}(\Frob_{\id{q}})$ is the identity matrix in
  $\PGL_2(\F_q)$) and $\kro{-2}{q}=-1$. This contradicts the fact that
  $E_g$ and $\E$ are congruent modulo $p$.
\end{proof}
	
\subsection{The Diophantine equation $x^2+dy^6=z^p$}
	
\begin{thm}
  \label{thm:asymptotic2}
  Let $d$ be a prime number congruent to $19$ modulo $24$ and such
  that the class number of $K=\Q(\sqrt{-d})$ is prime to $6$. Then
  there are no non-trivial primitive solutions of the equation
  \[
    x^2+dy^6=z^p,
  \]
  for $p$ large enough.
\end{thm}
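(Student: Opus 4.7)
The plan is to follow the template of Theorem~\ref{thm:asymptotic}, substituting the Frey curve $\Et$ of~(\ref{eq:ell-curve-2}) for $\E$ and the prime $3$ for the prime $2$. Starting from a putative non-trivial primitive solution $(a,b,c)$ of~(\ref{eq:2}), I would first verify an analogue of Lemma~\ref{lemma:no-CM} (the $j$-invariant of $\Et$ is not real for non-degenerate $(a,b,c)$ unless $d$ takes one of finitely many small exceptional values). The conductor computation of \cite[Lemmas 2.13--2.15]{PT} combined with absolute irreducibility of $\overline{\rho_{\Et,p}}$ for large $p$ (from \cite[Theorem 5.1]{PT} and \cite{MR2075481}) and Ribet's lowering the level then produce a newform $g\in S_2(\tilde N,\varepsilon)$, where $\tilde N$ is supported only at primes dividing $6d$, such that $\overline{\rho_{g,\id{p}}}$ is isomorphic to the twisted residual representation attached to $\Et$. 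An Ellenberg-type large image argument rules out CM for $g$.

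Next, I would run Mazur's discarding step exactly as in the proof of Theorem~\ref{thm:asymptotic}: setting $S_\ell = \{(\tilde a,\tilde b,\tilde c) \in \F_\ell^3 \setminus \{(0,0,0)\} : \tilde a^2 + d\tilde b^6 \equiv \tilde c^p \pmod{\ell}\}$ for small auxiliary primes $\ell$ and defining the analogous quantity $B(\ell, g;\tilde a,\tilde b,\tilde c)$ for equation~(\ref{eq:2}), one concludes that $p$ divides the product $\prod_{(\tilde a,\tilde b,\tilde c)\in S_\ell} B(\ell,g;\tilde a,\tilde b,\tilde c)$. Checking that this product is non-zero for some $\ell$ eliminates all newforms $g$ with $K_g \neq K_f$. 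Since $\tilde N$ depends only on $d$, this amounts to a finite computation in the spaces $S_2(\tilde N,\varepsilon)$, which is handled by the companion code.

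For the remaining newforms with $K_g = K_f$, Theorem~\ref{thm:main-decomposition} produces a building block $E_g/K$ with $\overline{\rho_{E_g,p}} \simeq \overline{\rho_{\Et,p}}$ and conductor dividing $\tilde N$. Since $\Et$ carries the $K$-rational $3$-torsion point $(0,0)$, the image of $\overline{\rho_{\Et,3}}$ sits in a Borel of $\GL_2(\F_3)$. To transfer this to $E_g$, I would mimic the mod-$2$ argument at the end of the proof of Theorem~\ref{thm:asymptotic}: the fixed field $T$ of $\overline{\rho_{E_g,3}}$ has degree at most $|\GL_2(\F_3)|=48$ over $K$ and is unramified outside primes dividing $6d$; Hermite--Minkowski then yields a finite list of candidate fields, and an effective Chebotarev argument combined with Lemma~\ref{lemma:equality-ap} rules out those candidates whose Galois groups contain elements projecting to a non-Borel element of $\GL_2(\F_3)$, forcing $E_g$ to acquire a $K$-rational point of order $3$.

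The main obstacle is the final classification step: one must prove that any elliptic curve $E/K$ with a $K$-rational point of order $3$ and conductor supported only at primes of $K$ dividing $6d$ is in fact a base change of a $\Q$-rational curve, whereupon the usual split-prime asymmetry $a_{\id q}(E_g) = a_{\overline{\id q}}(E_g)$ conflicts with the computable asymmetry of $\Et$ at a small split prime $\id q$. Unlike the $2$-torsion case, there is no off-the-shelf analogue of Theorem~\ref{thm:nonexistence}; however, the hypotheses $d \equiv 19 \pmod{24}$ (so that $2$ is inert and $3$ splits in $K$) together with $3\nmid h_K$ are tuned, by exactly the minimal-model argument of Theorem~\ref{thm:nonexistence}, so that a semi-global minimal Weierstrass model of $E$ is global; parametrizing such models with a rational $3$-torsion point via $X_1(3)$ then reduces the problem to an $S$-unit-type equation that collapses to $\Z$ and can be ruled out by a Kodaira-type case analysis at the primes of $K$ above $2$, $3$ and $d$. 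The companion code carries out both this final enumeration and the Mazur elimination.
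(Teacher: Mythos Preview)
Your outline tracks the paper's proof closely, but there are three substantive gaps.

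First, a factual slip: for $d\equiv 19\pmod{24}$ one has $-d\equiv 2\pmod 3$, which is a non-residue, so $3$ is \emph{inert} in $K$, not split. The paper uses exactly this (together with $2$ inert) in bounding the conductor exponents during the final enumeration.

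Second, your transfer of the $3$-torsion to $E_g$ is not quite right as stated. The phrase ``elements projecting to a non-Borel element of $\GL_2(\F_3)$'' does not characterise reducibility: individual elements can each lie in some Borel while the image sits in none, and conversely reducibility alone gives only a $3$-isogeny, not a rational $3$-torsion point. The paper avoids this by a different mechanism: from $a_{\id q}(\Et)\equiv \norm(\id q)+1\pmod 3$ for all good $\id q$, Lemma~\ref{lemma:equality-ap} gives the same congruence for $\tilde E_g$ at small primes, a Sturm-type bound extends it to all primes, and then Theorem~2 of \cite{MR604840} produces a curve $E'$ \emph{isogenous} to $\tilde E_g$ over $K$ with a rational $3$-torsion point. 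You should expect to pass to an isogenous curve here.

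Third, and most importantly, you omit the key local input at the ramified prime. The curve $\Et$ has additive reduction at $(\sqrt{-d})$ and acquires good reduction over $K(\sqrt[6]{-d})$; this forces the minimal discriminant of $E'$ to have \emph{even} valuation at $(\sqrt{-d})$. Without this constraint the $X_1(3)$-parametrisation $y^2+a_1xy+a_3y=x^3$ with $\Disc = a_3^3(a_1^3-27a_3)$ an $S$-unit does not ``collapse to $\Z$'': it is precisely the parity at $(\sqrt{-d})$ that lets the paper reduce, after a short case analysis, to the rational equation $\beta^3(d\alpha^3+27\beta)=\pm 2^r3^q$ with $\alpha,\beta\in\Z$ and bounded exponents. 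Even then the enumeration does not come out empty: four non-rational candidates survive over $\Q(\sqrt{-547})$, and one must check separately that their $3$-isogeny quotients are rational elliptic curves, whence the same split-prime asymmetry argument finishes the job.
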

\begin{proof}
  The proof mimics that of Theorem~\ref{thm:asymptotic}, in particular
  all results of Section~\ref{section:decomposition} hold for $\Et$,
  with the following important observation:
		
  (1) The curve $\Et$ does not have complex multiplication if $p>3$ by
  \cite[Lemma 3.2]{GPV}.
		
  (2) The curve $\Et$ has additive reduction at the prime $\sqrt{-d}$,
  and acquires good reduction over the extension $K(\sqrt[6]{-d})$
  (see~\cite[Remark 2]{PT}). In particular, if $\tilde{E}_g$ denotes
  the elliptic curve defined over $K$ that is obtained after applying
  the lowering the level result to $\Et$ (whose existence is warranted
  by Theorem~\ref{thm:main-decomposition}), then it also acquires good
  reduction over the extension $K(\sqrt[6]{-d})$, hence its minimal
  discriminant valuation at the prime $\sqrt{-d}$ must be even.
		
  (3) The curve $\Et$ has a $K$-rational $3$-torsion point, so we
  would like to know that the same is true for $\tilde{E}_g$. Since
  the curve $\Et$ has a point of order $3$, for all prime ideals
  $\id{p}$ of good reduction,
  $a_{\id{p}}(\Et) \equiv \norm(\id{p})+1 \pmod{3}$. Using
  Lemma~\ref{lemma:equality-ap} we know that
  $a_{\id{p}}(\Et) = a_{\id{p}}(\tilde{E}_g)$ for all small prime
  ideals $\id{p}$. In particular,
  $a_{\id{p}}(\tilde{E}_g) \equiv \norm(\id{p})+1 \pmod 3$ for all
  small prime ideals, hence by the so called ``Sturm'' bound (see for
  example Corollary 9.20 of \cite{MR2289048}), the congruence holds
  for all prime ideals of good reduction. Then by \cite[Theorem
  2]{MR604840} there exists a curve $E'$ over $K$ which is isogenous
  to $\tilde{E}_g$ over $K$ which has a rational point of order $3$.
		
  (4) The hypothesis $d\equiv 19 \pmod {24}$ implies that the primes
  $2$ and $3$ are inert in $K/\Q$. Then the curve $\Et$ has reduction
  type $\text{IV}^*$ at the prime ideal $(2)$ (by \cite[Lemma
  22.14]{PT}), so it has additive but potentially good reduction. Then
  its local type is preserved by a congruence modulo any prime larger
  than $3$ (as explained in the proof of
  Lemma~\ref{lemma:equality-ap}) so $\tilde{E}_g$ has also additive
  but potentially good reduction at the prime ideal $(2)$. The same is
  true for the prime ideal $(3)$, the curve $\Et$ has reduction type
  II or III (by \cite[Lemma 2.15]{PT}).
		
  Then there exists an elliptic curve $E'$ defined over $K$ with the
  following properties:
		
  \begin{itemize}
  \item The conductor of $E'$ is supported at the prime ideals
    dividing $6d$.
  \item If the model $E'$ is minimal at the prime ideal $(\sqrt{-d})$,
    then the discriminant $\Disc(E')$ of $E'$ has even valuation at
    $(\sqrt{-d})$.
  \item The curve $E'$ has a $K$-rational $3$-torsion point $P$.
			
  \item The curve $E'$ has potentially good reduction at $(2)$ and at
    $(3)$.
  \end{itemize}

  Take a semi-global minimal model for $E'$, i.e.\ a model which is
  minimal at all primes except one extra prime ideal $\id{p}$, which
  we can assume that does not divide $6d$. The coordinates of the
  point $P$ are algebraic integers (by \cite[Theorem 3.4, Chapter
  VII]{MR2514094}) so after a translation (a transformation which
  preserves the discriminant of the equation) we can assume that the
  rational $3$-torsion point is the origin $(0,0)$, so the model is of
  the form
  \begin{equation}
    \label{eq:4}
    E': y^2 + a_1xy + a_3y = x^3+a_2x^2+a_4x,    
  \end{equation}
  where $a_1, a_2,a_3,a_4$ are algebraic integers. Let $y=\alpha x$ be
  the tangent line of $E'$ at $P$ (for some number $\alpha$).  The
  fact that $P=(0,0)$ is an inflection point of $E'$ implies that the
  substitution $y=\alpha x$ on equation~(\ref{eq:4}) (and substracting
  the left hand side with the right hand side) gives the polynomial
  $-x^3$.

  Then $\alpha^2+a_1\alpha - a_2=0$, so $\alpha$ is an
  algebraic integer.  The change of variables $y' = y-\alpha x$,
  $x' = x$ (which preserves the discriminant and the properties of the
  model) sends the tangent line to the line $y'=0$. In particular, we
  can (and do) assume that our semi-global minimal model is of the
  form
  \[
    E':y^2+a_1 xy + a_3y=x^3,
  \]
  where $a_1,a_3\in \ZZ[\frac{1+\sqrt{-d}}{2}]$. In particular,
  \[
    \Disc(E') = a_3^3(a_1^3-27a_3)=2^r 3^q
    (\sqrt{-d})^{2s}\id{p}^{12}.
  \]
  The even exponent at $(\sqrt{-d})$ comes from the fact that the
  model is minimal at $(\sqrt{-d})$ and the second condition. In
  particular, the ideal $\id{p}^{12}$ is a principal ideal, so under
  our assumption on the class number of $K$ being prime to $6$,
  $\id{p}$ is principal and hence $E'$ does have a global minimal
  model (of the same form). In particular, since the only roots of
  unity in $K$ are $\pm 1$, for the minimal model it holds that
  \begin{equation}
    \label{eq:disc2}
    \Disc(E') = a_3^3(a_1^3-27a_3)=\pm 2^r 3^q d^{s}.    
  \end{equation}
  If $(\sqrt{-d})$ does not divide the gcd of the two middle factors
  (as elements of $K$), $a_3$ must be a rational number. Then $a_1^3$
  is also a rational number and hence $a_1$ is rational. On the other
  hand, if $(\sqrt{-d})$ divides the gcd of the two middle factors,
  then the minimality condition of the model $E'$ implies that
  $v_{(\sqrt{-d})}(a_3) \le 2$, so it is either $1$ or $2$. If it
  happens to be $2$, then $a_3$ is once again a rational number, and
  the same proof as before implies that $a_1$ is rational as well.
		
  Suppose then that $a_3 = \sqrt{-d} \cdot \beta$ for some algebraic
  integer $\beta$ not divisible by $(\sqrt{-d})$, and that
  $a_1 = \sqrt{-d}\cdot \alpha$ for some algebraic integer
  $\alpha$. Then the valuation at $(\sqrt{-d})$ of the middle term in
  (\ref{eq:disc2}) is $4$, hence $s=2$ and we get the equation
  \begin{equation}
    \label{eq:3}
    \beta^3(d\alpha^3+27\beta) = \pm 2^r 3^q.
  \end{equation}
  Once again, $\alpha$ and $\beta$ must be integers. Since the curve
  $E'$ has potentially good reduction at the prime ideals $(2)$ and
  $(3)$, there is a bound for $r$ and $q$ that we recall. By
  \cite[Theorem 10.4]{zbMATH00706265} the exponent conductor of $E/K$
  at the prime ideal $(2)$ is bounded by $8$ and at the prime ideal
  $(3)$ is bounded by $5$. By \cite[Table 4.1]{zbMATH00706265} (page
  365) the number of irreducible components of the special fiber of
  Neron's model is at most $9$. Then Ogg's formula \cite[Theorem
  11.1]{zbMATH00706265} implies that $r \le 16$ and $s \le 13$. Then
  we can run over all possible exponents on the right-hand side within
  this bound, and verify for which values, we get a divisor $\beta$
  such that $\pm 2^r3^q/\beta^3-27\beta$ is a prime times a perfect
  cube.  Furthermore, we discard the solutions for which the curve
  $E'$ does not have additive reduction at both primes $2$ and $3$
  (since the curve $\Et$ has this property). We get only four
  non-rational candidates, all of them defined over the quadratic
  field $K = \Q(\sqrt{-547})$, corresponding to the values
  \[
    (\alpha, \beta) \in \{(-6,2), (-12,16), (6,-2), (12, -16)\}.
  \]
  Clearly there are only two non-isomorphic pairs (the map
  $(x,y) \to (x,-y)$ gives an isomorphism between a pair $(a_1,a_3)$
  and a pair $(-a_1,-a_3)$). It is easy to verify that for the two
  isomorphism classes of curves, the quotient by the $3$-torsion point
  is a rational elliptic curve, hence the curve $E'$ is isogenous to a
  base change. As in Theorem~\ref{thm:asymptotic} this contradicts the
  fact that $\Et$ satisfies
  \[
    a_{\id{q}}(\Et) = \kro{-3}{\norm(\id{q})}
    a_{\overline{\id{q}}}(\Et),
  \]
  for all prime ideals $\id{q}\nmid 3$ of good reduction (as proven in
  \cite[Proposition 2.3]{PT}).
\end{proof}
	
\bibliographystyle{plain}
\bibliography{biblio}
	
\end{document}